\newtheorem{thm}{Theorem}[section]
\newtheorem{cor}[thm]{Corollary}
\newtheorem{rmk}[thm]{Remark}
\DeclareMathOperator*{\im}{im} 
\DeclareMathOperator*{\coker}{coker} 
\DeclareMathOperator*{\rank}{rank}
\DeclareMathOperator*{\Spec}{Spec}
\DeclareMathOperator*{\Hodge}{Hodge}
\DeclareMathOperator*{\Tate}{Tate}
\DeclareMathOperator*{\supp}{supp}
\DeclareMathOperator*{\tr}{tr}
\DeclareMathOperator*{\Cor}{Cor}
\DeclareMathOperator*{\MHS}{MHS}
\newcommand{\PP}{\mathbb{P}}
\newcommand{\cA}{\mathcal{A}}
\newcommand{\cL}{\mathcal{L}}
\newcommand{\cZ}{\mathcal{Z}}
\newcommand{\lOmega}{\underline{\Omega}}
\newcommand {\C} {{\mathbb C}}
\newcommand {\R} {{\mathbb R}}
\newcommand {\Z} {{\mathbb Z}}
\newcommand {\Q} {{\mathbb Q}}
\newcommand {\HH} {{\mathbb H}}
\newcommand {\E} {{\mathcal E}}
\newcommand {\dt} {{\bullet}}
\newcommand {\OO} {{\mathcal O}}
 \newtheorem{lemma}{Lemma}[section]
 \newtheorem{prop}{Proposition}[section]
\numberwithin{equation}{section}
\begin{document}
\title{Hodge cycles and the Leray filtration}
\author{
        Donu Arapura    
}
\thanks{Partially supported by a grant from the Simons foundation}
\address{Department of Mathematics\\
Purdue University\\
West Lafayette, IN 47907\\
U.S.A.}
\subjclass[2020]{14C30}
\maketitle 

\begin{abstract}
 Given a fibered variety, we
  pull back the Leray filtration to the Chow group, and use this  to
  give some criteria for the Hodge and Tate conjectures to hold for
  such varieties. We show that the Hodge
  conjecture holds for a good desingularization of a self fibre
  product of a nonisotrivial elliptic surface under appropriate conditions. We also show that the
  Hodge and Tate conjectures hold for natural families of abelian
  varieties parameterized by certain Shimura curves.
   \end{abstract}

In \cite{arapura2}, we checked the Hodge and Tate conjectures for the universal family of genus two curves with level structure
by showing the absence of  these cycles in a critical piece of the
Leray spectral sequence. In this article, we will to take this idea further.
Given a surjective map of smooth projective varieties $f:X\to Y$, let  $U\subseteq Y$ be the complement of the discriminant and $V=f^{-1}U$.
Then we define a filtration on the rationalized Chow group such that we have a cycle map
$$\Box^{p,i}:Gr^i_L CH^p(V) \to H^{i}(U, R^{2p-i} f_*\Q)$$
By  \cite{arapura}, the right side carries a mixed Hodge structure.
We have that $\Box^{p,i}$ lands within the space of Hodge cycles. Let
$m=\dim Y$ and $r=\dim X-\dim Y$.
Our first theorem is that the Hodge conjecture, in Jannsen's sense, holds for $V$,
if  $\Box^{p,i}$ surjects onto the space of Hodge cycles for all $(p,i)$ in the parallelogram bounded by
the lines $i=0$, $i=m$, $i=2p$, and $i=2p-r$.  An analogous statement holds for the Tate conjecture. As a corollary, we show that when $X$ is a fourfold,
the  Hodge conjecture holds for $X$ if $\Box^{2,m}$ surjects onto the space of Hodge cycles. A special case of this was used in \cite{arapura2}.

In the rest of this article, we turn our attention to  particular classes of examples.
Suppose that $f_\lambda:\E_\lambda\to Y_\lambda$ is a family of nonisotrivial   elliptic surfaces,
such that the ``Kodaira-Spencer" map 
$$\kappa_\lambda:Gr^1_F H^1(U_\lambda, R^1f_{\lambda*}\C)\to Gr^0_F H^1(U_\lambda, R^1f_{\lambda*}\C)\otimes \Omega^1_{\lambda}$$
is injective somewhere. Theorem \ref{thm:main} shows
that the Hodge conjecture holds for the complement  $V_\lambda\subset \E_\lambda\times_{Y_\lambda} \ldots \times_{Y_\lambda}\E_\lambda$
of the set of singular fibres, when $\lambda$ is very general. 
For a zero dimensional family,  injectivity of $\kappa_\lambda$ holds
if and only if the elliptic surface is extremal in the sense
that the Picard number is maximal and the Mordell-Weil rank is zero (lemma \ref{lemma:extremal}).
Elliptic modular surfaces are extremal, so we recover  a result  of Gordon \cite{gordon} that theorem \ref{thm:main} 
holds for such surfaces. Our  theorem also  holds for certain
families of nonextremal elliptic K3 surfaces constructed by Dolgachev
\cite{dolgachev} and Hoyt \cite{hoyt}. Gordon's argument for elliptic
modular surfaces does not generalize. Our proof  is different, and is based on analyzing
 $\Box^{p,i}$ for $i=0,1$.  The  map $\Box^{p,0} $ is shown to surject onto the space of Hodge
 cycles by the K\"unneth formula and invariant theory. On the other hand  for very general $\lambda$, we will show $\Box^{p,1}$ is trivially surjective in the sense that there are no Hodge cycles on the target. This will be deduced by showing that   injectivity  of $\kappa_\lambda$
implies injectivity of the  Kodaira-Spencer map
$$Gr^p_F H^{1}(U_\lambda, R^{2p-1} F_{\lambda*}\C)\to Gr^{p-1}_F H^{1}(U_\lambda, R^{2p-1} F_{\lambda*}\C)$$
 where $F_\lambda:V_\lambda\to Y_\lambda$ is the structure map.

Suppose that $Y$ is a Shimura curve associated to a quaternion
division algebra
split at exactly one real place over a totally real field. The curve
$Y$ can be viewed as a moduli space of abelian varieties with some extra structure.
In particular, for sufficiently high level, $Y$ carries a natural family of abelian varietes $f:\cA\to Y$. Theorem \ref{thm:SCHodge} shows that the Hodge 
and Tate conjectures
holds for a fibre product $X=\cA\times_Y\ldots \times_Y \cA\xrightarrow{F} Y$. The proof again involves checking surjectivity
of $\Box^{p,i}$ onto the space of Hodge cycles. When $i=0$ or $2$, this follows from invariant theory. When $i=1$,
work of Viehweg and Zuo \cite{vz}  is used to show that in fact
$$Gr^p_FH^1(Y, R^{2p-1}F_*\C)=0$$
Therefore $\Box^{p,1}$ is again trivially surjective.

My thanks to the referee catching a serious error in the  first version of this paper.

\section{Hodge cycles and the Leray filtration}

Let $\MHS$ denote the category of rational mixed Hodge structures, and
$\MHS^p$ the full subcategory of polarizable Hodge structures.
We note that mixed Hodge structures of geometric origin in fact lie in
$\MHS^p$, and the category of pure polarizable Hodge structures is semisimple
\cite{beilinson}.
Given a mixed Hodge structure $H$, set
$$\Hodge(H) := Hom_{\MHS}(\Q(0), H)$$
The following facts will often be used without comment below.

\begin{lemma}\label{lemma:HodgeFun}
\-
\begin{enumerate}
\item There is an injection
  $$\Hodge(H) \hookrightarrow \Hodge( Gr^W_0H)$$
  It is an isomorphism if $H$ has nonnegative weight, i.e. $W_{-1}H=0$.

\item $\Hodge(-)$ is an exact functor on the category of polarizable
  mixed Hodge structures of nonnegative  weight.

  \item If
  $$H_1\to H_2\to H_3$$
  is an exact sequence of mixed Hodge structures, such
  that $H_1$ is polarizable with nonnegative weight, then
  $$\Hodge(H_1)\to\Hodge( H_2)\to \Hodge(H_3)$$
  is exact.
  
  \item If $\phi:H_1\to H_2$ is a morphism of polarizable Hodge
    structures, such that $Gr^0_FH_1\to Gr_F^0 H_2$ is surjective
    (resp. injective)
    then $\Hodge(H_1)\to \Hodge(H_2)$ is surjective (resp. injective).
\end{enumerate}
\end{lemma}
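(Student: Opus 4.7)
Part (1) is strictness of morphisms in $\MHS$ with respect to the weight filtration: any map $\Q(0)\to H$ has image in $W_0H$ since $\Q(0)$ is pure of weight~$0$, and post-composition with the projection $W_0 H\twoheadrightarrow Gr^W_0 H$ defines the claimed arrow. For injectivity, strictness forces $f(\Q(0))\cap W_{-1}H = f(W_{-1}\Q(0)) = 0$, so any Hodge class landing in $W_{-1}H$ must already vanish. When $W_{-1}H=0$ one has $W_0H = Gr^W_0H$, and the map is tautologically an isomorphism.

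For (2), the point is that $Gr^W_0$ is exact on $\MHS$ and sends a polarizable MHS of nonnegative weight to a pure polarizable Hodge structure of weight~$0$. The latter category is semisimple by Beilinson's theorem, on which $Hom_{\MHS}(\Q(0),-)$ is exact (it picks off the $\Q(0)$-isotypic component). Combined with (1), which in the nonnegative-weight regime gives an isomorphism $\Hodge(H_i)\cong\Hodge(Gr^W_0 H_i)$, this yields exactness of $\Hodge(-)$. Part (3) then reduces to (2) via the image $K=\image(H_1\to H_2)$: as a quotient of $H_1$, $K$ is itself polarizable of nonnegative weight (the weight filtration descends exactly), so (2) applied to $0\to\ker(H_1\to K)\to H_1\to K\to 0$ gives a surjection $\Hodge(H_1)\twoheadrightarrow\Hodge(K)$. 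A Hodge class in $H_2$ dying in $H_3$ lies in $\Hodge(K)$ by left-exactness of $Hom_{\MHS}(\Q(0),-)$, hence lifts to $\Hodge(H_1)$.

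For (4), I reduce to pure polarizable Hodge structures of weight~$0$, since morphisms of pure HS of distinct weights vanish and $\Hodge(H)=0$ when $H$ is pure of nonzero weight. On that semisimple subcategory there is a natural isomorphism $\Hodge(H)\otimes_\Q\C\cong H^{0,0}= Gr^0_FH$, under which $\Hodge(\phi)\otimes\C$ becomes $Gr^0_F(\phi)$; faithful flatness of $\C$ over $\Q$ then transfers surjectivity and injectivity between the two maps. The only nontrivial external input across all four parts is Beilinson's semisimplicity theorem, and I anticipate no genuine obstacle: the arguments are formal bookkeeping with the Hodge and weight filtrations, routed through the semisimplicity of pure polarizable Hodge structures.
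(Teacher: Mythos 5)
Parts (1)--(3) of your proposal are correct and follow essentially the same route as the paper: strictness of $W$ for (1), reduction via the exact functor $Gr^W_0$ to the semisimple category of pure polarizable weight-$0$ structures for (2), and the image trick for (3).

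Part (4) contains a genuine error. The asserted natural isomorphism $\Hodge(H)\otimes_\Q\C\cong H^{0,0}=Gr^0_FH$ for a pure polarizable weight-$0$ structure $H$ is false: $\Hodge(H)\otimes\C$ is the complexification of the $\Q(0)$-isotypic component of $H$, which is in general a proper subspace of $H^{0,0}$. For instance, if $E$ is an elliptic curve without complex multiplication and $H=H^1(E)\otimes H^1(E)^\vee$, then $H^{0,0}$ is two-dimensional while the rational $(0,0)$-classes (endomorphisms of $E$ up to isogeny) form a one-dimensional space. What is true is only a natural inclusion $\Hodge(H)\otimes\C\hookrightarrow Gr^0_FH$, coming from $F^1\cap\overline{F}^0=0$ in weight $0$. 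This inclusion does suffice for the injectivity half of (4), since injectivity of $Gr^0_F(\phi)$ restricts to subspaces; but for the surjectivity half you cannot transfer surjectivity of $Gr^0_F(\phi)$ down to the subspaces by ``faithful flatness.''

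The repair --- which is the paper's actual argument --- is to route through the cokernel: set $H_3=\coker\phi$. By the exactness you established in (2), $\coker\bigl(\Hodge(H_1)\to\Hodge(H_2)\bigr)=\Hodge(H_3)$, and $\Hodge(H_3)\hookrightarrow Gr^0_FH_3=0$ because $Gr^0_F$ is right exact and $Gr^0_F(\phi)$ is assumed surjective. Dually, injectivity can be handled via $\Hodge(\ker\phi)\hookrightarrow Gr^0_F\ker\phi=0$. You already have every ingredient needed for this; only the final identification is misstated.
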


\begin{proof}
 The first item follows from  strictness of the weight filtration  \cite[thm 2.3.5]{deligneH}.
 The second follows from (1) and the semisimplicity of the category of
 pure polarizable Hodge structures. Statement (3) follows from (1) and
 (2), and the observation that subobjects and quotients of $H_1$ are
 also polarizable with nonnegative weight. 
 Given $\phi:H_1\to H_2$ inducing
 a surjective map $Gr^0_FH_1\to Gr_F^0 H_2$, let $H_3=\coker\phi$. Then
 $$\coker(\Hodge(H_1)\to \Hodge(H_2) )=\Hodge(H_3)\subseteq Gr_F^0H_3=0$$ 
 The  injectivity part of (4) is similar. 
\end{proof}

If $X$ is an algebraic variety, let $CH_p(X)$ (resp. $CH^p(X)$)
denote the Chow group of $p$ dimensional (resp. codimension $p$)
cycles tensored with $\Q$. When $X$ is defined over $\C$,
 Borel-Moore homology $H_i(X,\Q)$ carries a mixed Hodge structure dual
 to the one on
compactly supported cohomology $H_c^i(X,\Q)$.
We have a cycle map 
$$\Box_p :CH_p(X)\to H_{2p}(X,\Q)$$
such that the image lands in
$\Hodge(H_{2p}(X,\Q)(-p))$, c.f. \cite[chap 19]{fulton}, \cite[\S 5, \S7]{jannsen}. We will usually write $[Z]$ instead of $\Box_p(Z)$.
Following Jannsen \cite{jannsen}, we say that the Hodge conjecture holds
for $X$ if the cycle map 
$$CH_p(X)\to \Hodge(H_{2p}(X,\Q)(-p))$$
is surjective  for
all $p$. When $X$ is smooth,  we can apply Poincar\'e duality to  identify this with the cycle map
$$\Box^p:CH^p(X)\to \Hodge(H^{2p}(X,\Q)(p)) $$
We  recall that $\Box^*$ is a graded ring homomorphism.

Let us now suppose that $K$ is a 
finitely generated field of characteristic $0$, and let $G_K = Gal(\bar K/K)$.
Suppose that
$X$ is defined over   $K$, and let $\bar X= X\times_{\Spec K}\bar K$.
Given a $G_K$-module $H$, define the space of Tate cycles
in $H$ by
$$\Tate(H)  =\sum_{L/K \text{ finite}}
H^{G_L} $$
Fix a prime $\ell$.
Again following Jannsen, we say that Tate's conjecture holds for $X$ if
the cycle map
$$\Box_{p,\ell}: CH_p(\bar X)\otimes \Q_\ell\to \Tate(H_{2p}(\bar X_{et},
\Q_\ell)(-p))$$
is surjective for each $p$.
When $X$ is smooth, this is equivalent to the more familiar statement
that the cycle map
$$\Box^{p}_{\ell}:CH^p(\bar X)\otimes \Q_\ell\to \Tate(H^{2p}(\bar X_{et},
\Q_\ell)(p))$$
is surjective for each $p$. We note that $\Box^p$ and $\Box^p_\ell$ are compatible
under the Artin comparison isomorphism.

We note that Jannsen's form of the Hodge conjecture follows from the
usual Hodge conjecture, so it is not stronger. More precisely, we have:

\begin{thm}[Jannsen]\label{thm:jannsen}
  If  the Hodge conjecture holds for a desingularization of a
compactification of $X$, then it holds for $X$ 
\end{thm}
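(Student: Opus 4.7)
The plan is to handle the case where $X$ is smooth by a weight-plus-localization argument, and reduce the general case to the smooth one by proper descent.

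Suppose first that $X$ is smooth. Then $X$ is open in $\bar X$ and lies in its smooth locus, so the given desingularization $\pi:\tilde X\to\bar X$ is an isomorphism over $X$; identify $X$ with $\pi^{-1}(X)\subseteq\tilde X$, and by choosing $\pi$ appropriately take $D:=\tilde X\setminus X$ to be a simple normal crossings divisor. The open/closed decomposition $\tilde X=X\sqcup D$ gives compatible localization sequences
\begin{align*}
CH_p(D) &\to CH_p(\tilde X)\to CH_p(X)\to 0, \\
H_{2p}^{BM}(D)(-p) &\to H_{2p}^{BM}(\tilde X)(-p)\to H_{2p}^{BM}(X)(-p)\to H_{2p-1}^{BM}(D)(-p).
\end{align*}
Deligne's weight theory shows that $H_{2p}^{BM}(\tilde X)(-p)$ is pure of weight $0$ (since $\tilde X$ is smooth projective), while $H_{2p-1}^{BM}(D)(-p)$ has weights $\geq 1$: for $D$ projective of dimension $\dim\tilde X-1$, $H_{2p-1}(D)$ has weights in $[-(2p-1),0]$, and Tate-twisting by $-p$ shifts these up by $2p$. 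Hence Lemma \ref{lemma:HodgeFun}(1) gives $\Hodge H_{2p-1}^{BM}(D)(-p)=0$, and Lemma \ref{lemma:HodgeFun}(3) applied to the Borel--Moore sequence yields a surjection $\Hodge H_{2p}^{BM}(\tilde X)(-p)\twoheadrightarrow \Hodge H_{2p}^{BM}(X)(-p)$. Combined with the surjection $CH_p(\tilde X)\twoheadrightarrow CH_p(X)$ and the assumed HC for $\tilde X$, the standard diagram chase produces HC for $X$.

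For a general (possibly singular) $X$, I would induct on $\dim X$. Choose a resolution $\rho:X'\to X$ which is an isomorphism over the smooth locus of $X$ and set $Z=X_{\mathrm{sing}}$, $Z'=\rho^{-1}(Z)$. A desingularized compactification of $X'$ can be built as a birational modification of $\tilde X$, so the hypothesis of the theorem is inherited by the smooth $X'$; HC for $X'$ follows from the previous paragraph. Since $\dim Z,\dim Z'<\dim X$, HC for $Z$ and $Z'$ follows by the inductive hypothesis (their compactifications admit desingularizations extracted from suitable blowups of $\tilde X$). The abstract blow-up square $(X',Z')\to(X,Z)$ gives a Mayer--Vietoris long exact sequence of mixed Hodge structures
\[\cdots\to H_{2p}^{BM}(Z')(-p)\to H_{2p}^{BM}(X')(-p)\oplus H_{2p}^{BM}(Z)(-p)\to H_{2p}^{BM}(X)(-p)\to H_{2p-1}^{BM}(Z')(-p)\to\cdots\]
and a right-exact analogue for Chow groups, compatible via cycle maps. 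A diagram chase using Lemma \ref{lemma:HodgeFun} and the weight estimates produces HC for $X$.

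The main obstacle is the general case: one must verify that the blow-up square induces compatible Mayer--Vietoris sequences on Chow groups and on Borel--Moore MHS, and check the weight inputs needed to apply Lemma \ref{lemma:HodgeFun} at each stage of the induction (in particular the subtlety that $Z'$ may fail to be proper, requiring a further embedding into a suitable blowup of $\tilde X$). An alternative route, closer to Jannsen's original, is to use a smooth proper hyperresolution $Y_\bullet\to X$ and reduce HC for $X$ to HC for each smooth projective $Y_i$; this is cleaner conceptually, but requires analogous MHS bookkeeping and ultimately the same inductive structure.
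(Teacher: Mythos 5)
Your first paragraph is sound, and it is in substance the second half of the paper's Lemma \ref{lemma:localization}: a smooth $X$ sits as a Zariski open subset of a smooth projective variety for which the Hodge conjecture is assumed, the weight bound on $H_{2p-1}(D)(-p)$ kills the obstruction term via Lemma \ref{lemma:HodgeFun}, and surjectivity of $CH_p(\tilde X)\to CH_p(X)$ finishes the chase. (The simple normal crossings condition on $D$ is never used; projectivity of $D$ already gives the weight estimate.)

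The reduction of the singular case is where the argument breaks. Your induction needs the Hodge conjecture for $Z=X_{\mathrm{sing}}$, for $Z'=\rho^{-1}(Z)$, and for a desingularized compactification of $X'$ obtained by further modifying $\tilde X$. None of these is supplied by the hypothesis: the theorem assumes the conjecture only for the single variety $\tilde X$, and the Hodge conjecture for a smooth projective variety implies nothing about its subvarieties, its blow-up centers, or its blow-ups (the blow-up formula reintroduces the cohomology of the center, for which one would again need the conjecture). So the inductive hypothesis cannot be invoked, and what you prove is the weaker statement in which one assumes the conjecture for \emph{all} smooth projective varieties of dimension at most $\dim X$. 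The statement as given needs no induction. With $j:X\hookrightarrow \bar X$ and $\pi:\tilde X\to \bar X$, consider $\alpha=j^*\circ \pi_*$ on Chow groups and on Borel--Moore homology. On Chow groups $\alpha$ is surjective: a $p$-dimensional $V\subseteq X$ is hit, up to a nonzero rational multiple, by any $p$-dimensional multisection of $\pi^{-1}(\bar V)\to \bar V$. On $\Hodge(H_{2p}(\cdot)(-p))$ it is also surjective: $Gr^W_{-2p}H_{2p}(\bar X)\to Gr^W_{-2p}H_{2p}(X)$ is onto by exactly your localization and weight argument, and $Gr^W_{-2p}H_{2p}(\tilde X)\to Gr^W_{-2p}H_{2p}(\bar X)$ is onto because, dually, $\ker\bigl(H^{2p}(\bar X)\to H^{2p}(\tilde X)\bigr)=W_{2p-1}H^{2p}(\bar X)$ by cohomological descent along a smooth proper hypercovering extending $\pi$ (each $E_1^{s,t}=H^t(\tilde X_s)$ is pure of weight $t$, so the kernel of the edge map has weights at most $2p-1$). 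Since both homology groups, after the twist, have weights at least $0$, Lemma \ref{lemma:HodgeFun} upgrades surjectivity on $Gr^W_{-2p}$ to surjectivity on $\Hodge$, and the usual diagram chase concludes. This descent step is the real content of Jannsen's Theorem 7.9, whose proof is all the paper cites here; your proposal replaces it with an induction whose hypotheses are not available.
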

\begin{proof}
This follows from the proof of \cite[thm 7.9]{jannsen}.
\end{proof}

\begin{lemma}\label{lemma:localization}
 Let $U\subset X$ be Zariski open, and $Z=X-U$. If $X$ is projective and the Hodge conjecture holds for $U$ and $Z$, then it holds for $X$.
 If the Hodge conjecture holds for $X$, then it holds for $U$.
 \end{lemma}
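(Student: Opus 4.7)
The plan is to compare the localization sequence for Chow groups with the long exact sequence of Borel-Moore homology, linked via the cycle map, and then transfer the exactness downstairs to the spaces of Hodge cycles using lemma \ref{lemma:HodgeFun}.

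Write $i: Z \hookrightarrow X$ and $j : U \hookrightarrow X$ for the closed and open immersions. The Fulton localization sequence
$$CH_p(Z) \xrightarrow{i_*} CH_p(X) \xrightarrow{j^*} CH_p(U) \to 0$$
is exact, and maps, via the cycle class, to the long exact sequence of mixed Hodge structures
$$H_{2p}(Z,\Q) \to H_{2p}(X,\Q) \to H_{2p}(U,\Q) \to H_{2p-1}(Z,\Q).$$
A weight count is decisive. By Deligne's bound, $H^i_c(Z,\Q)$ carries weights in $[0,i]$, so the dual Borel-Moore group $H_{2p-1}(Z,\Q)$ has weights in $[-(2p-1),0]$, and $H_{2p-1}(Z,\Q)(-p)$ has weights in $[1,2p]$. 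By lemma \ref{lemma:HodgeFun}(1), this forces $\Hodge(H_{2p-1}(Z,\Q)(-p)) = 0$. Since $H_{2p}(Z,\Q)(-p)$ and $H_{2p}(X,\Q)(-p)$ are polarizable of nonnegative weight, two applications of lemma \ref{lemma:HodgeFun}(3) yield the exact sequence
$$\Hodge(H_{2p}(Z,\Q)(-p)) \to \Hodge(H_{2p}(X,\Q)(-p)) \to \Hodge(H_{2p}(U,\Q)(-p)) \to 0.$$

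Both assertions now follow by a diagram chase on the square formed by these two exact rows and the cycle map. For the first, $X$ projective makes $Z$ projective, so the Hodge conjecture hypothesis applies to both pieces. Given $\alpha \in \Hodge(H_{2p}(X,\Q)(-p))$, the Hodge conjecture on $U$ supplies $\beta \in CH_p(U)$ whose class equals the image of $\alpha$; lifting to $\tilde\beta \in CH_p(X)$ via surjectivity of $j^*$, the difference $\alpha - [\tilde\beta]_X$ lies in the image of $\Hodge(H_{2p}(Z,\Q)(-p))$ by the bottom exact sequence, and hence comes from $CH_p(Z)$ by the Hodge conjecture on $Z$. For the second assertion, given $\alpha \in \Hodge(H_{2p}(U,\Q)(-p))$, surjectivity of the induced map on Hodge cycles gives a lift $\tilde\alpha$ on $X$; the Hodge conjecture on $X$ realizes $\tilde\alpha$ as the class of some $\xi \in CH_p(X)$, and then $j^*\xi$ has class $\alpha$.

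The main subtle point is the vanishing $\Hodge(H_{2p-1}(Z,\Q)(-p)) = 0$, which must hold without smoothness or properness hypotheses on $Z$ (in particular $Z$ need not be projective in the second assertion); this is precisely what Deligne's general weight bound on compactly supported cohomology provides.
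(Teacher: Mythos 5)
Your proof is correct and takes essentially the same route as the paper: the Fulton localization sequence for Chow groups mapped to the Borel--Moore localization sequence of mixed Hodge structures, exactness of the induced sequence of Hodge-cycle spaces via Lemma \ref{lemma:HodgeFun}, and a five-lemma chase. The only cosmetic difference is in the second assertion, where you obtain surjectivity of $\Hodge(H_{2p}(X,\Q)(-p))\to\Hodge(H_{2p}(U,\Q)(-p))$ from the vanishing $\Hodge(H_{2p-1}(Z,\Q)(-p))=0$ and part (3) of Lemma \ref{lemma:HodgeFun}, whereas the paper deduces it from surjectivity on $Gr^W_{-2p}$; both hinge on the same Deligne weight bound, namely that $H^{2p-1}_c(Z,\Q)$ has weights at most $2p-1$.
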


 \begin{proof}
   We have a localization sequence
   $$H_{2p}(Z)\to H_{2p}(X)\to H_{2p}(U)$$
   where $H_*$ denotes Borel-Moore homology.
  Projectivity of $X$ implies that, after twisting by $\Q(-p)$, the weight of the first term
  becomes nonnegative. So we deduce
   from lemma~\ref{lemma:HodgeFun} that
   $$ \Hodge(H_{2p}(Z)(-p))\to \Hodge(H_{2p}(X)(-p))\to
   \Hodge(H_{2p}(U)(-p))$$
   is exact. 
   By \cite[\S 1.8, chap 19]{fulton}, this fits into a larger
   commutative diagram with exact rows
 $$
\xymatrix{
 CH_p(Z)\ar[r]\ar[d]^{\alpha} & CH_p(X)\ar[r]\ar[d] & CH_p(U)\ar[r]\ar[d]^{\beta} & 0 \\ 
 \Hodge(H_{2p}(Z)(-p))\ar[r] & \Hodge(H_{2p}(X)(-p))\ar[r]^{\gamma} & \Hodge(H_{2p}(U)(-p)) & 
}
$$
The first part of the lemma follows from the surjectivity of $\alpha$ and $\beta$, and
the five lemma.

We note that
$Gr^W_{-2p}H_{2p-1}(Z)=0$, because $Gr^W_{2p}H^{2p-1}_c(Z)=0$.
Therefore $Gr^W_{-2p}H_{2p}(X)\to Gr^W_{-2p}H_{2p}(U)$ is surjective.
Consequently $\gamma$ is surjective. This implies last part of the lemma.

\end{proof}

Now, let us suppose that $f:X\to Y$ is a surjective morphism with
connected fibres between smooth projective varieties. We suppose also that
$\dim Y<\dim X$. We can analyze
the Hodge structure $H^i(X) $ in terms of the map by applying the decomposition for Hodge
modules \cite{saito}, but we prefer to do this in a more canonical fashion.
First, let us choose a nonempty Zariski open subset  $U\subset Y$, such that $f$
is smooth over $U$, and let $V= f^{-1}U$. 
Then the Leray filtration is defined by
$$L_f^p H^i(V,\Q) =\im [H^i(U,\tau_{\le i-p}\R f_*\Q\to H^i(U,\R f_*\Q)=H^i(V,\Q)]$$
We usually write $L^\dt = L_f^\dt$.
By a theorem of Deligne \cite{deligneL}, the Leray spectral sequence degenerates, so that
$$Gr^b_L H^a(V)\cong H^{b}(U, R^{a-b}f_*\Q)$$
Furthermore, the author \cite[cor 2.6]{arapura} showed that $L$ is a 
 filtration by submixed Hodge structures. Therefore  $Gr^b_L H^a(V) $
 carry natural mixed Hodge structures. These can be seen to coincide with the ones constructed by Saito \cite{saito2}.
  We define a Leray filtration on the Chow group in the most naive way.
Let $L^\dt CH^p(V)$ denote the preimage of $L^\dt H^{2p}(V,\Q)$ under
the cycle map
$$CH^p(V)\to H^{2p}(V,\Q)$$
Then we have an induced cycle map
\begin{equation}
  \label{eq:Box}
   \Box^{p,i}:Gr^i_L CH^p(V) \to H^{i}(U, R^{2p-i} f_*\Q)
 \end{equation}
 Since $\Box^{p,i}(\alpha)\equiv \Box(\alpha) \mod L^{i+1}$, it
 follows that the image lies in
$$\Hodge( H^{i}(U, R^{2p-i} f_*\Q)(p))$$
When $f:X\to Y$ is defined over the above field $K$, one
has an entirely parallel story for $\ell$-adic cohomology. We have a
cycle map
$$\Box^{p,i}:Gr^i_L CH^p(\bar V)\otimes \Q_\ell \to \Tate(H^{i}(\bar
U_{et}, R^{2p-i} \bar f_*\Q_\ell)(p))$$

\begin{lemma}\label{lemma:prod}
  \-
  \begin{enumerate}
  \item If $f_i:V_i\to U_i$ are two smooth projective maps, then
    the exterior product is compatible with $L^\dt$ in the sense that
    $$L_{f_1}^pH^i(V_1,\Q)\otimes L_{f_2}^qH^j(V_2,\Q) \subseteq
    L_{f_1\times f_2}^{p+q}H^{i+j}(V_1\times V_2,\Q)$$
  \item Furthermore
    $$L^r H^{k}(V_1\times V_2,\Q) = \bigoplus_{i+j=k, p+q=r}
    L^pH^i(V_1,\Q)\otimes L^qH^j(V_2,\Q)$$
  \item There is a commutative diagram
    $$   
\xymatrix{
 Gr_L^iCH^p(V_1)\otimes Gr_L^jCH^q(V_2)\ar[r]\ar[d]^{\Box^{p,i}\otimes \Box^{q,j}} & Gr_L^{i+j}CH^{p+q}(V_1\times V_2)\ar[d]^{\Box^{p+q,i+j}} \\ 
 H^i(U_1, R^{2p-i}f_{1*}\Q)\otimes H^j(U_2, R^{2q-j}f_{2*}\Q) \ar[r] & H^{i+j}(U_1\times U_2, R^{2(p+q)-i-j}(f_1\times f_2){*}\Q)
}
    $$
    
    \item When $U=U_1=U_2$, the same statements hold for the fibre
      product $V_1\times_U V_2$.
  \end{enumerate}
\end{lemma}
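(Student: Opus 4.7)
The plan is to derive all four parts from a compatibility between canonical truncation and external tensor product in the derived category, combined with the relative Künneth isomorphism. For (1), because the $f_i$ are smooth and proper, on $U_1 \times U_2$ one has a Künneth quasi-isomorphism $Rf_{1*}\Q \boxtimes Rf_{2*}\Q \simeq R(f_1 \times f_2)_*\Q$. The natural map $\tau_{\le a}K^\bullet \otimes \tau_{\le b}L^\bullet \to \tau_{\le a+b}(K^\bullet \otimes L^\bullet)$ in the derived category, applied with $a = i-p$ and $b = j-q$ and followed by hypercohomology on $U_1 \times U_2$ and the external cup product, sends $L^p_{f_1}H^i(V_1) \otimes L^q_{f_2}H^j(V_2)$ into $L^{p+q}_{f_1\times f_2}H^{i+j}(V_1 \times V_2)$.

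For (2), combining the relative Künneth decomposition $R^n(f_1 \times f_2)_*\Q \cong \bigoplus_{a+b=n} R^a f_{1*}\Q \boxtimes R^b f_{2*}\Q$ with the cohomological Künneth formula for $H^*(U_1 \times U_2, -)$ applied to these local system coefficients, and invoking Deligne's Leray degeneration, one obtains
\begin{equation*}
\dim Gr^r_L H^k(V_1 \times V_2) = \sum_{p+q=r,\ i+j=k} \dim Gr^p_L H^i(V_1) \cdot \dim Gr^q_L H^j(V_2).
\end{equation*}
Matching this with the direct sum of the inclusions of (1) forces equality. Part (3) is then automatic: the cycle class of an exterior product of cycles is the external cohomological product of their classes, which by (1) and (2) lies in the correct Leray-filtered piece, and passing to the associated graded produces the commutative diagram, with the lower horizontal arrow being the external cup product on the Künneth decomposition.

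For (4), I would realize the fibre product as a base change $V_1 \times_U V_2 = (V_1 \times V_2) \times_{U \times U} U$ along the diagonal $\Delta: U \hookrightarrow U \times U$. Smooth proper base change then yields $R^n(f_1 \times_U f_2)_*\Q \cong \bigoplus_{a+b=n} R^a f_{1*}\Q \otimes R^b f_{2*}\Q$ on $U$, and the truncation-versus-tensor argument goes through verbatim, with cup product on $H^*(U, -)$ playing the role of the external cup product on $H^*(U_1 \times U_2, -)$. The main obstacle is the dimension count in (2): upgrading the inclusion of (1) to an equality requires combining Deligne's Leray degeneration (so that filtration-graded dimensions add to the total) with the relative Künneth isomorphism for smooth proper families over non-compact bases; both ingredients are standard but must be assembled carefully.
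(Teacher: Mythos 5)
Your proposal is correct and follows essentially the same route as the paper: the Künneth isomorphism $\R f_{1*}\Q\boxtimes \R f_{2*}\Q\cong \R(f_1\times f_2)_*\Q$ compatibly with canonical truncations gives (1) and (2), compatibility of the cycle class with external products gives (3), and restriction along the diagonal $U\subset U\times U$ gives (4). The only difference is that where the paper asserts (2) as a formal consequence of the isomorphism in the filtered derived category, you upgrade the inclusion of (1) to an equality by a graded dimension count using Deligne degeneration and the Künneth formula for local-system coefficients; this is a valid and slightly more explicit way to finish that step.
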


\begin{proof}
  The first two statements are formal consequences of the isomorphism
  $$(\R f_{1*}\Q, \tau_{\le})\boxtimes (\R f_{2*}\Q, \tau_{\le})\cong
  (\R (f_{1}\times f_2)_*\Q, \tau_{\le})$$
  in the filtered derived category. The third statement is a formal
  consequence of the first two statements, and the compatibility of
  $\Box$ with external products. The fourth follows from the previous
  statements by restricting to the diagonal embedding $U\subset U\times U$.
\end{proof}

Let $n=\dim X$, $m=\dim Y$, and $r= n-m$.
Then the groups on the right of \eqref{eq:Box} will vanish for $(p,i)$ outside the
closed parallelogram $P$ in the $(p,i)$-plane bounded by the lines
  $i=0$, $i=2m$, $i=2p$, and $i=2p-2r$. 
  Let $Q\subset P$ be the closed parallelogram bounded by the lines
  $i=0$, $i=m$, $i=2p$, and $i=2p-r$. This is the lower left quarter
  of $P$.

\begin{thm}\label{thm:leray}
  When $f$ is defined over $\C$, and the cycle map
  $$\Box^{p,i}:Gr^{i}_L CH^p(V) \to \Hodge(H^{i}(U, R^{2p-i} f_*\Q)(p))$$
  is surjective for all $(p,i)$ in $Q$, the Hodge conjecture holds for
  $V$.
   When $f$ is defined over $K$, and the cycle map
  $$\Box_\ell^{p,i}:Gr^{i}_L CH^p(V) \otimes \Q_\ell \to \Tate(H^{i}(\bar U_{et}, R^{2p-i}
  \bar f_{et,*}\Q_\ell)(p))$$
  is surjective for all $(p,i)$ in $Q$, the Tate conjecture holds for $V$.
\end{thm}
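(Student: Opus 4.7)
The plan is to deduce the Hodge conjecture for $V$ from the hypothesized surjectivity on $Q$ by first reducing, via the Leray filtration, to the same surjectivity on the larger parallelogram $P$, and then extending the surjectivity range from $Q$ to $P$ using two Hard Lefschetz operators at the Chow level. For the reduction, note that each graded piece
$$Gr^i_L H^{2p}(V,\Q)(p)\cong H^i(U, R^{2p-i}f_*\Q)(p)$$
is a polarizable MHS of nonnegative weight: $R^{2p-i}f_*\Q$ is pure of weight $2p-i$, and cohomology of the smooth base $U$ with pure coefficients of weight $w$ has weights $\geq w+i$ in degree $i$, so after the twist by $(p)$ we land in weights $\geq 0$. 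By lemma \ref{lemma:HodgeFun}(2), $\Hodge$ is exact on the short exact sequences $0\to L^{i+1}\to L^i\to Gr^i_L\to 0$, so a descending induction on $i$ lifts any Hodge cycle in $H^{2p}(V,\Q)(p)$ back to $CH^p(V)$ once each $\Box^{p,i}$ with $(p,i)\in P$ surjects onto its Hodge cycles.

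To extend the surjectivity from $Q$ to $P$, I would use two Hard Lefschetz operators at the Chow level. For the vertical direction, take a hyperplane class $\eta\in CH^1(X)$; its restriction $\eta|_V\in L^0 CH^1(V)$ is a relative hyperplane class, and by lemma \ref{lemma:prod}, cup with $\eta|_V^k$ yields a commutative square lifting the relative Hard Lefschetz isomorphism
$$H^i(U,R^{2p-i}f_*\Q)(p)\xrightarrow{\sim} H^i(U,R^{2(p+k)-i}f_*\Q)(p+k)$$
for $k=r-2p+i\geq 0$. Hence surjectivity at $(p,i)\in Q$ propagates to surjectivity at $(p+k,i)$, covering the full left portion $\{(p,i)\in P:i\leq m\}$. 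For the horizontal direction, take a hyperplane class $\tilde\eta\in CH^1(Y)$; its pullback $f^*\tilde\eta$ lies in $L^2 CH^1(V)$, and lemma \ref{lemma:prod} furnishes another commutative square whose bottom row is cup with $\tilde\eta|_U^{m-i}$,
$$H^i(U, R^{2p-i}f_*\Q)(p)\longrightarrow H^{2m-i}(U, R^{2p-i}f_*\Q)(p+m-i).$$

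This cup map is not an isomorphism on $H^*(U,-)$ in general, but by Saito's Hard Lefschetz for pure Hodge modules it restricts to the isomorphism $\IH^i(Y,R^{2p-i}f_*\Q)\xrightarrow{\sim}\IH^{2m-i}(Y,R^{2p-i}f_*\Q)(m-i)$ on intermediate extensions. After the twist by $(p)$, the lowest-weight graded of $H^i(U,R^{2p-i}f_*\Q)(p)$ is precisely $\IH^i(Y,R^{2p-i}f_*\Q)(p)$, and since the ambient MHS has nonnegative weight, lemma \ref{lemma:HodgeFun}(1) identifies its Hodge cycles with those of this $\IH$-piece. The cup map is therefore an isomorphism on Hodge cycles, so surjectivity on the $\{i\leq m\}$ half of $P$ extends to the $\{i\geq m\}$ half, giving all of $P$. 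The Tate analog runs in parallel, using Deligne's weight bounds on $\ell$-adic cohomology and BBDG's Hard Lefschetz in place of the MHS and Saito inputs. The most delicate step is the horizontal one: one must know that Hodge (or Tate) cycles on $H^i(U,-)(p)$ are detected on the intermediate extension, which rests on identifying the lowest-weight graded of $H^i(U,R^{2p-i}f_*\Q)$ with $\IH^i(Y,R^{2p-i}f_*\Q)$.
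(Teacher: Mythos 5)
Your proposal is correct and follows essentially the same route as the paper: exactness of $\Hodge$ on the Leray graded pieces to reduce to surjectivity on $P$, relative hard Lefschetz via $c_1(\OO_X(1))^{r-2p+i}$ for the vertical reflection, and hard Lefschetz on intersection cohomology combined with the lowest-weight description of $H^i(U,-)$ for the horizontal reflection. The only imprecision is that the lowest-weight graded is the \emph{image} of $\IH^i(Y,R^{2p-i}f_*\Q)$ rather than $\IH^i$ itself (the paper cites Peters--Saito only for surjectivity of that map), but since surjectivity on Hodge cycles is all that is needed, this does not affect the argument.
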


\begin{proof}
  We just prove the first statement for the  Hodge conjecture. The  proof of the second statement is almost identical.
  Suppose that we know that $\Box^{p,i}$ is surjective for all
  $(p,i)\in P$.
By lemma~\ref{lemma:HodgeFun}, $\Hodge$ is exact, so
we have a noncanonical isomorphism
\begin{equation}\label{eq:HodgeL}
  \begin{split}
    \Hodge(H^{2p}(V,\Q)(p)) &=\bigoplus_i \Hodge(Gr^i_L(H^{2p}(V,\Q)(p)))\\
    &=\bigoplus_i \Hodge(H^{i}(U, R^{2p-i}  f_*\Q)(p))\\
    &=\bigoplus_i \im Gr_L^iCH^p(V)
  \end{split}
\end{equation}
  This implies  that the Hodge conjecture holds for $V$.

  It remains to show that surjectivity of $\Box^{p,i}$ for $(p,i)\in
  Q$ implies surjectivity for all points in $P$. We do this in two steps. We
  first assume $2p-i\le r$, and then we show that the condition of surjectivity of $\Box^{p,i}$ is
  stable under the reflection  $(p,i)\mapsto (q, i)$, where $q=r+i-p$. 
Let $\OO_Y(1)$ and $\OO_X(1)$ be ample divisors on $Y$ and
$X$ respectively. Cup product with $c_1(\OO_X(1))^{r-2p+i}$  induces an
isomorphism of local systems
$$\xi: R^{2p-i}f_*\Q\stackrel{\sim}{\to} R^{2q-i}f_*\Q$$
Together with lemma~\ref{lemma:prod}, this gives rise to a commutative diagram
  $$
\xymatrix{
 Gr^{i}_L CH^p(V)\ar[r]^>>>{\Box^{p,i}}\ar[d]^{\xi'} & \Hodge(H^{i}(U, R^{2p-i}f_*\Q)(p))\ar[d]^{\xi''} \\ 
 Gr^{i}_L CH^{q}(V)\ar^>>>{\Box^{q,i}}[r] & \Hodge( H^{i}(U, R^{2q-i}f_*\Q))(q))
}
$$
where $\xi'$ and $\xi''$ are   products with
$c_1(\OO_X(1))^{r-2p+i}$. The map $\xi''$ is an isomorphism because
$\xi$ is. If
$\Box^{p,i}$ is surjective, then $\Box^{q,i}$ must also be
surjective. This allows to extend surjectivity of $\Box^{p,i}$ from
$Q$ to the parallelogram  $P'$ bounded by $i=0$, $i=m$, $i=2p$, and $i=2p-2r$.

  To finish the proof, we have to show that if  $\Box^{p,i} $ is
  surjective for $(p,i)\in P'$, then this holds for $P$.
  Let $i\le m$, $\iota = 2m-i$,
and $q=p+m-i$. The transformation $(p,i)\mapsto (q,\iota)$ is a
reflection about $i=m$, which preserves lines of slope $2$.
Therefore $P$ is the union of $P'$ and the image of $P'$ under this reflection.
  If $L$ is a local system defined on $U$, let
$IH^i(Y, L)= H^i(Y, (j_{!*}L[m])[-m])$ denote intersection cohomology
(with the naive indexing convention). 
We have a commutative diagram of vector spaces
$$
\xymatrix{
 IH^{i}(Y, R^{2p-i}f_*\Q)\ar[r]^{\pi}\ar[d]^{\eta'} & Gr^W_{2p} H^{i}(U, R^{2p-i}f_*\Q)\ar[d]^{\eta} \\ 
 IH^{\iota}(Y, R^{2p-i}f_*\Q)\ar[r]^{\pi} & Gr^W_{2q} H^{\iota}(U, R^{2p-i}f_*\Q)
}
$$
where  the vertical maps  are given by the product 
with $c_1(\OO_Y(1))^{m-i}$. The map $\eta'$ is an isomorphism by the hard Lefschetz
for intersection cohomology \cite[6.2.10]{bbd}. The maps labelled by
$\pi$ are the natural ones; these are surjective by \cite{ps} (plus the
comparison theorem in $\ell$-adic case). These
facts imply that $\eta$ is surjective. Now consider the diagram
$$
\xymatrix{
 Gr^{i}_L CH^p(V)\ar[r]^>>>{\Box^{p,i}}\ar[d] & \Hodge(H^{i}(U, R^{2p-i}f_*\Q)(p))\ar[d]^{\eta} \\ 
 Gr^{\iota}_L CH^{q}(V)\ar^>>>{\Box^{q,\iota}}[r] & \Hodge( H^{\iota}(U, R^{2p-i}f_*\Q))(p+m-i))
}
$$
where the vertical maps are again products with
$c_1(\OO(1))^{m-i}$. Since $\Box^{p,i} $ is surjective by assumption, and
$\eta$ is surjective by what we just proved, we can
conclude that $\Box^{q,\iota}$ is surjective. Therefore the surjectivity condition
is preserved by the reflection $(p,i)\mapsto (q,\iota)$, and this
completes the proof.

\end{proof}

\begin{rmk}\label{rmk:leray}
  It is easy to see using a modification of \eqref{eq:HodgeL} that conversely, if the Hodge conjecture holds for
  $V$, then the $\Box^{p,i}$ must surject onto the space of Hodge cycles
  for all $(p,i)\in P$.
\end{rmk}

\begin{cor}\label{cor:leray}
   Suppose that $\dim X=4$, and $\dim Y=m$. If   the cycle map
$$\Box^{2,m}:Gr^{m}_L CH^2(V) \to \Hodge(H^{m}(U, R^{4-m} f_*\Q)(2))$$
is surjective, then the Hodge conjecture holds for  $X$.
In particular, this is the case if
  $\Hodge(H^{m}(U, R^{4-m} f_*\Q)(2))=0$.
\end{cor}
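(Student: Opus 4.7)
My plan is to apply Theorem~\ref{thm:leray} to the open part $V$ in order to obtain the Hodge conjecture for $V$, and then to lift the conclusion to $X$ via Lemma~\ref{lemma:localization} together with the classical fact that the Hodge conjecture holds for all smooth projective varieties of dimension at most three.

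First I check that $\Box^{p,i}$ surjects onto the Hodge cycles for every $(p,i) \in Q$. With $r = 4 - m$, the inequalities $0 \le i \le m$ and $2p - r \le i \le 2p$ defining $Q$ force $p \in \{0, 1, 2\}$. The corner $(p,i) = (0,0)$ is trivial (the fundamental class of $V$), and $(p,i) = (2,m)$ is exactly the hypothesis. For the remaining line $p = 1$, I invoke the Lefschetz $(1,1)$ theorem on the smooth projective $X$: every Hodge class in $H^2(X, \Q)(1)$ is algebraic, while strictness of the weight filtration together with Lemma~\ref{lemma:HodgeFun} shows that the restriction $\Hodge(H^2(X, \Q)(1)) \to \Hodge(H^2(V, \Q)(1))$ is surjective. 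Pulling cycles back from $X$ to $V$ then yields surjectivity of $\Box^1 \colon CH^1(V) \to \Hodge(H^2(V, \Q)(1))$, and the direct sum decomposition~\eqref{eq:HodgeL} appearing in the proof of Theorem~\ref{thm:leray} upgrades this to surjectivity of every $\Box^{1,i}$. Theorem~\ref{thm:leray} therefore gives the Hodge conjecture for $V$.

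To pass from $V$ to $X$, set $Z = X \setminus V = f^{-1}(Y \setminus U)$ and apply Lemma~\ref{lemma:localization}, so it suffices to establish the Hodge conjecture for $Z$. Since $Y \setminus U$ is a proper closed subset of $Y$, we have $\dim Z \le (m - 1) + r = 3$. By Theorem~\ref{thm:jannsen} this reduces to the Hodge conjecture for a smooth projective desingularization of a compactification of $Z$, which is a smooth projective variety of dimension at most three. For such varieties the Hodge conjecture is classical: codimensions $0$ and $\dim$ are trivial, codimension $1$ is Lefschetz $(1,1)$, and in dimension three codimension $2$ follows from codimension $1$ via hard Lefschetz. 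This completes the argument, and the ``in particular'' clause is immediate, since a surjection onto a zero space is vacuous. The main point of care is the decomposition~\eqref{eq:HodgeL}, which is what lets a single application of Lefschetz $(1,1)$ on $V$ certify surjectivity of all the maps $\Box^{1,i}$ simultaneously.
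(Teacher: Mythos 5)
Your argument is correct and follows essentially the same route as the paper's proof: localization plus Jannsen's theorem and the classical dimension~$\le 3$ case dispose of $X-V$, and Theorem~\ref{thm:leray} applied to $Q$ reduces everything to the single $p=2$ point $(2,m)$, which is the hypothesis; your expanded treatment of the $p=1$ line (surjectivity of the total $\Box^1$ from Lefschetz $(1,1)$ on $X$, then descent to each $\Box^{1,i}$ via exactness of $\Hodge$ on the Leray filtration) is a valid filling-in of the paper's one-line appeal to the Lefschetz $(1,1)$ theorem. One cosmetic point: the bound $\dim Z\le (m-1)+r$ is not the right justification, since fibres over the discriminant may have dimension exceeding $r$, but the needed inequality $\dim Z\le 3$ is immediate because $Z$ is a proper closed subset of the fourfold $X$.
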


\begin{proof}
  It is well known that the  Hodge conjecture holds for smooth
  projective varieties of dimension at most three (by the Lefschetz
  $(1,1)$ theorem, and the hard Lefschetz theorem). Therefore by
  theorem~\ref{thm:jannsen}, it holds for $X-V$. So by
  lemma~\ref{lemma:localization}, it suffices to prove the conjecture
  for $V$. Applying  theorem \ref{thm:leray}, we have to check the surjectivity for
  $\Box^{p,i}$ for $(p,i)\in Q$. For $p=0$, this trivial, and for $p=1$, this follows from the
  Lefschetz $(1,1)$ theorem. It is easy to check by plotting $Q$, that
  in each of the cases $m=1,2,3$,
  there is exactly one point in $Q$ with $p>1$, namely $(2,1), (2,2),
  (2,3)$ respectively.

\end{proof}

When $U$ is the moduli space of genus two curves with fine level structure, and 
$f:X\to Y$ the universal family of curves, then \cite[cor 3.2]{arapura2} shows that
$$\Hodge(H^{3}(U, R^{1} f_*\Q)(2))\subset  Gr_F^2H^{3}(U, R^{1} f_*\C) =0$$ 
Therefore the Hodge conjecture holds for $X$.
 The corollary also implies a result of Conte and Murre \cite{cm}
 that the Hodge conjecture holds for uniruled fourfolds.
 Similarly, the conjecture holds for fourfolds fibred by surfaces with
 trivial geometric genus:

\begin{cor}
Suppose that  $\dim X=4$, and $\dim Y=2$, and the general fibre of $f:X\to Y$ is a surface with $p_g=0$.
Then the Hodge conjecture holds for $X$.

\end{cor}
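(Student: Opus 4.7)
The plan is to invoke corollary~\ref{cor:leray} in the case $m = 2$. The lattice points $(p,i) \in Q$ with $p \le 1$ are handled by the fundamental class and by the Lefschetz $(1,1)$ theorem, so the Hodge conjecture for $X$ will follow once the cycle map
$$\Box^{2,2}\colon Gr^2_L CH^2(V) \to \Hodge\!\bigl(H^2(U, R^2 f_\ast \Q)(2)\bigr)$$
is shown to be surjective.

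The hypothesis $p_g = 0$ on the general fibre propagates to every fibre of $V/U$ by semi-continuity, so $R^2 f_\ast \cO_V = 0$; equivalently the polarizable variation $\V := R^2 f_\ast \Q$ is of pure Hodge type $(1,1)$, and in particular $F^2 \V = 0$. Combined with the exponential sequence, this yields a canonical isomorphism of $\Q$-local systems
$$NS_{V/U} \otimes \Q \xrightarrow{\sim} \V,$$
so every stalk of $\V$ is spanned by classes of Cartier divisors in the corresponding fibre --- a kind of ``relative Lefschetz $(1,1)$'' available because of the $p_g = 0$ hypothesis.

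With this input, I would produce cycles by exploiting the multiplicativity of the Leray filtration from lemma~\ref{lemma:prod}. For a divisor $D \in CH^1(V)$ with Leray projection $\eta_D \in H^0(U,\V)$ and a divisor $\xi \in CH^1(Y)$, the codimension-two cycle $D \cdot f^\ast \xi$ lies in $L^2 CH^2(V)$ and its image under $\Box^{2,2}$ equals $\eta_D \smile [\xi] \in H^0(U,\V) \otimes H^2(U,\Q) \subset H^2(U,\V)$. A Gysin variant applied to curves $C \subset Y$ and relative divisors on $f^{-1}(C)$ contributes classes in the image of $H^0(C,\V) \to H^2(Y,\V)$. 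The main obstacle is to show that these algebraic classes exhaust $\Hodge(H^2(U,\V)(2))$: because $F^2 \V = 0$, every Hodge cycle lies in the pure weight-$4$ piece $W_4 H^2(U,\V)$, a subquotient of $\IH^2(Y, j_{!\ast}\V)$, and one must decompose its $(2,2)$-component into a ``flat-section'' piece and a ``Gysin'' piece, identifying each with one of the above cycle constructions --- possibly after a finite \'etale cover of $U$ trivializing the monodromy on $NS_{V/U}$, whose contribution descends to $V$ by a trace argument.
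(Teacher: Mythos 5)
Your reduction to the surjectivity of $\Box^{2,2}$ via corollary~\ref{cor:leray}, and the observation that $R^2f_*\Q|_U$ is a variation of type $(1,1)$ whose stalks are spanned by divisor classes, match the paper's starting point. But the step you label ``the main obstacle'' is in fact the entire content of the proof, and your sketch of it does not close. Two things are missing. First, a divisor class on a single fibre $X_y$ gives you nothing until you spread it out to a relative divisor over (a generically finite cover of) all of $Y$; this is not automatic, and the paper does it by choosing $y$ outside the countable union $C$ of images of the components of $Hilb_{X/Y}$ that fail to dominate $Y$, so that each irreducible divisor $Z_i\subset X_y$ in a spanning set of $H^2(X_y,\Q)$ lies on a component $T_i$ surjecting onto $Y$, whose universal family $\cZ_i$ is the desired relative divisor. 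Second, your appeal to ``a finite \'etale cover of $U$ trivializing the monodromy'' presupposes that the monodromy of $R^2f_*\Q|_U$ is finite, which you never justify and which is not a formal consequence of $p_g=0$ (the monodromy preserves only an indefinite lattice, so it could well be infinite a priori). In the paper this finiteness is again a byproduct of the Hilbert-scheme construction: the monodromy permutes the finitely many classes cut out on a fibre by the generically finite multisections $T_i\to Y$, and these span the stalk, so the representation has finite image and $R^2f_*\Q$ becomes $\Q(-1)^N$ after a finite base change.

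Once that trivialization is in hand the conclusion is immediate --- $H^2(U,R^2f_*\Q)\cong\bigoplus_i H^2(U,\Q)\cup[\cZ_i]$ and the Lefschetz $(1,1)$ theorem identifies $\Hodge(H^2(U,\Q)(1))$ with spans of divisor classes --- so products of the form $D\cdot f^*\xi$ do suffice at that stage, and no separate ``Gysin piece'' or decomposition of $W_4H^2(U,\V)$ is needed. The gap, then, is precisely the spreading-out and finite-monodromy argument; without it your classes $\eta_D\smile[\xi]$ only see the monodromy-invariant part $H^0(U,\V)\otimes H^2(U,\Q)$, and there is no reason they exhaust $\Hodge(H^2(U,\V)(2))$.
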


\begin{proof}
Given an irreducible component  $T$ of the  relative Hilbert scheme
$Hilb_{X/Y}$ \cite[chap IV]{grothendieck}, let $\supp(T)\subseteq Y$ denote its image in $Y$.
Let
$$C=\bigcup_{\supp(T)\subsetneqq Y} \supp(T)$$
as $T$ ranges over components of $Hilb_{X/Y}$.
Since $C$ is a countable union of proper subvarieties $U-C\not=\emptyset$.  Choose $y\in U-C$. Since $p_g(X_y)=0$, by the Lefschetz $(1,1)$ theorem, there exist
  irreducible divisors $Z_1,\ldots Z_N\subset X_y$ which span $H^2(X_y,\Q)$. Let
  $T_i$ be a component of $Hilb_{X/Y}$ containing $Z_i$. Then $T_i\to Y$
  is surjective. After replacing $T_i$ by an intersection of ample
  divisors we can assume that $T_i\to Y$ is generically finite.
  Let $\cZ_i$ denote the restriction of the universal family over $Hilb_{X/Y}$ to $T_i$.
  Then $\cZ_i$ is a relative divisor on $X/Y$ such that $[\cZ_i]$ is
  equal to a multiple of $[Z_i]$. It follows that, after a finite base
  change, $[\cZ_1], \ldots
  [\cZ_N]$ gives a basis of $R^2f_*\Q$, i.e. an isomorphism
  $\Q(-1)^N\cong R^2f_*\Q$. Therefore, we can identify
  $$H^2(U, R^2f_*\Q)\cong \bigoplus_i H^2(U,\Q)\cup [\cZ_i]$$
  The Lefschetz $(1,1)$ theorem now shows that 
  $$\Hodge(H^{2}(U, R^{2} f_*\Q)(2)) = \bigoplus_i
  \Hodge(H^2(U,\Q)(1))\cup [\cZ_i]$$
  is spanned by algebraic cycles.

\end{proof}

\begin{rmk}
Define the transcendental part $T\subseteq R^2f_*\Q|_U$ to be the smallest sub-variation of Hodge structure such $T_y^{20}= H^{20}(X_y)$ for some (hence all) $y\in U$.
 By essentially the same argument, the Hodge conjecture holds for $X$, if  $\Hodge(H^2(U,T)(2))=0$.
\end{rmk}

\begin{cor}\label{cor:tate}
 Suppose that $f$ is defined over $K\subset \C$, and that $U=Y$. Let $X_\C=X\times_{\Spec K}\Spec \C$ etc.
 If
 $$\Box^{p,i}: Gr^i_LCH^p( X_\C)\otimes \C\to Gr^p_F H^{i}( Y_{ \C}, R^{2p-i} f_{\C,*}\C)$$
 is surjective for all $(p,i)$ in $Q$, then the Hodge and Tate conjectures hold for $X_\C$ and $X$ respectively.
 
\end{cor}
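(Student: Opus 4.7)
My plan is to apply Theorem~\ref{thm:leray} in both its Hodge and Tate forms; the task becomes deducing that $\Box^{p,i}$ (resp.\ $\Box^{p,i}_\ell$) surjects onto $\Hodge$ (resp.\ $\Tate$) for all $(p,i)\in Q$ from the given $\C$-linear surjection onto $Gr^p_F$.

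For the Hodge half I would first observe that, because $f$ is smooth projective and $U=Y$, the sheaf $R^{2p-i}f_*\Q$ underlies a polarizable variation of Hodge structure of weight $2p-i$, so $H:=H^i(Y_\C,R^{2p-i}f_{\C,*}\Q)(p)$ is pure polarizable of weight $0$. For such $H$ one has a natural injection $\Hodge(H)\otimes_\Q\C\hookrightarrow Gr^0_F H_\C = Gr^p_F H^i(Y_\C,R^{2p-i}f_{\C,*}\C)$, which is built into the proof of Lemma~\ref{lemma:HodgeFun}(4). Since $\Box^{p,i}$ factors through $\Hodge(H)$, the hypothesis factors as
\[
Gr^i_L CH^p(X_\C)\otimes\C \to \Hodge(H)\otimes\C \hookrightarrow Gr^p_F H^i
\]
with the composite surjective. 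Hence the inclusion is an equality and $\Box^{p,i}$ is already surjective onto $\Hodge(H)$ as a $\Q$-linear map; Theorem~\ref{thm:leray} yields the Hodge conjecture for $X_\C$.

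For the Tate half I would run the parallel argument on the $\ell$-adic side. A spreading-out argument (any cycle on $X_\C$ is defined over a finitely generated $K$-subalgebra of $\C$ and specializes to a cycle on $\bar X$ with the same class via smooth and proper base change) shows that the image of $CH^p(\bar X)\otimes\Q_\ell$ agrees with the image of $CH^p(X_\C)\otimes\Q_\ell$ under the Artin comparison $H^i(\bar Y_{et},R^{2p-i}\bar f_{et,*}\Q_\ell)\cong H^i(Y_\C,R^{2p-i}f_{\C,*}\Q)\otimes\Q_\ell$. By the Hodge step this common image equals $\Hodge(H)\otimes\Q_\ell$ and is contained in $\Tate$. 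Promoting this containment to equality requires an $\ell$-adic analog of $\Hodge(H)\otimes\C\hookrightarrow Gr^p_F H^i$: in this pure, weight $0$, geometric setup, Tate cycles should embed into $Gr^p_F$ after tensoring to $\C$ via compatibility of the cycle class with the Hodge filtration, forcing $\dim_{\Q_\ell}\Tate=\dim_\Q\Hodge(H)$ and hence $\Tate=\Hodge(H)\otimes\Q_\ell$. Theorem~\ref{thm:leray} in its Tate form then gives the Tate conjecture for $X$.

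The Hodge half is essentially formal given purity of $H$ and Lemma~\ref{lemma:HodgeFun}. The main obstacle lies in the Tate half: both the specialization step matching the $\ell$-adic images of $CH^p(X_\C)$ and $CH^p(\bar X)$, and the $\ell$-adic embedding of Tate cycles into $Gr^p_F$ needed to bound $\dim_{\Q_\ell}\Tate$.
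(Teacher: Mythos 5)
Your Hodge half is correct and is essentially what the paper does: since $U=Y$, each $H:=H^{i}(Y_\C, R^{2p-i}f_{\C,*}\Q)(p)$ is pure polarizable of weight $0$, so $\Hodge(H)\otimes\C$ injects into $Gr^0_F H_\C=Gr^p_F H^{i}(Y_\C,R^{2p-i}f_{\C,*}\C)$, and surjectivity of the composite onto $Gr^p_F$ forces surjectivity onto $\Hodge(H)$; Theorem~\ref{thm:leray} then gives the Hodge conjecture for $X_\C$.

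The Tate half has a genuine gap at exactly the step you flag. There is no ``compatibility of the cycle class with the Hodge filtration'' that embeds the $\Q_\ell$-space of Tate classes into $Gr^p_F$ over $\C$: Galois invariance and the Hodge filtration live in different cohomology theories, and the only bridge is $p$-adic Hodge theory. The missing input is Faltings' theorem on Hodge--Tate decompositions \cite{faltings}, which (as in \cite[pp.~81--82]{tate}) yields
$$\dim_{\Q_\ell}\Tate\bigl(H^{2p}(\bar X_{et},\Q_\ell)(p)\bigr)\le \dim H^p(\bar X,\Omega^p_{\bar X}).$$
Moreover, with that bound in hand the paper's route is simpler than the one you propose: it does not verify the hypotheses of Theorem~\ref{thm:leray} in its Tate form at all. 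Instead it sums the Hodge step over all $i$ to conclude that $Gr^p_F H^{2p}(X_\C,\C)=H^p(\bar X,\Omega^p_{\bar X})\otimes\C$ is spanned by classes of finitely many cycles, which may be taken on $\bar X$ (the spreading-out you describe); these classes then span a $\Q_\ell$-subspace of $\Tate(H^{2p}(\bar X_{et},\Q_\ell)(p))$ of dimension $\dim H^p(\bar X,\Omega^p_{\bar X})$, and the Faltings bound forces them to span all of $\Tate$, which is the Tate conjecture for $X$ directly. Your graded-piece-by-graded-piece version would additionally require matching the $\ell$-adic Leray graded pieces with the de Rham ones under the $p$-adic comparison in order to distribute the Faltings bound over the summands of $H^{2p}$; this extra difficulty is avoidable and you should avoid it.
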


\begin{proof}
 The statement for the  Hodge conjecture is an immediate consequence of  the theorem.
 The assumption implies that  
 $$Gr^p_F H^{2p}(X_\C, \C)= H^p(\bar X, \Omega_{\bar X}^p)\otimes \C$$
 is generated by a finite number of algebraic cycles on $X_\C$, and therefore
 a finite number of cycles on $\bar X$. Tate's conjecture is now a known consequence of Faltings' theorem
 on Hodge-Tate decompositions \cite{faltings}, which implies that
 $$\dim \Tate(H^{2p}(\bar X_{et},\Q_\ell)(p))\le \dim H^p(\bar X, \Omega_{\bar X}^p)$$
 c.f.  \cite[pp 81-82]{tate}.
\end{proof}

\section{Fibre products of elliptic surfaces}

We start by summarizing some facts from Hodge theory needed below. Let $\pi:Y\to \Lambda$ be a smooth  projective curve over a smooth
base $\Lambda$. Let $S\subset Y$ be a divisor \'etale over $\Lambda$, and $U= Y-S$.
We recall that a mixed Hodge module on $Y$, consists a bifiltered regular
holonomic  left $D$-module $(M, F, W)$, a filtered perverse sheaf $(\cL, W)$,
and an isomorphism in the derived category
$$DR(M) = M\to  M\otimes \Omega_Y^1 \to  M\otimes \Omega_Y^2 \ldots \cong \cL\otimes \C$$
compatible with $W$. These are subject to some axioms that we will
not recall \cite{saito, saito2}. The modules of interest to us arise as follows.
Let $(\cL_U,V_U, F_U^\dt,\nabla_U)$ be
 a polarizable variation of  Hodge structure on $U$; here $V_U$ denotes a vector
bundle, $F_U^\dt$ a filtration on it,  $\nabla_U$ an integrable connection, and $\cL$
local system of $\Q$-vector spaces.
By Deligne, we have an extension of $(V_U,\nabla_U)$ to an vector bundle
with logarithmic connection $(V,\nabla)$ on $Y$ such that the  eigenvalues of the residues lie in $[0, 1)$. We filter
this by $F^p V = V\cap j_*F_U^p$.
Then $M=
V(*S)$ is a regular holonomic $D_Y$-module which corresponds under
Riemann-Hilbert to the perverse sheaf $\cL=\R j_*\cL_U[\dim Y]$ tensored with $\C$. 
When equipped with the Hodge filtration
$$F^pM = \sum_i F^iD_Y\cap F^{p+i} V$$
and an appropriate weight filtration $W$, $(M, \cL)$
forms a mixed Hodge module \cite[\S 3]{saito2}.
We have a filtered quasi-isomorphism
\begin{equation}
\label{eq:DRM}
( V\otimes \Omega^\dt_Y(\log S) , F) \cong (DR(M), F)
\end{equation}
where the two sides are filtered by
$$F^p( V\otimes \Omega_Y^i(\log S)) = F^{p-i}V\otimes  \Omega_Y^i(\log S)$$
and
$$F^p( M\otimes \Omega_Y^i ) = F^{p-i}M \otimes \Omega_Y^i$$
Saito \cite{saito, saito2} has shown that mixed Hodge modules are stable under direct images,
and furthermore that   $\R \pi_*$ applied to the second, and therefore also the first,
filtered complex of \eqref{eq:DRM}  is strict. These facts, together
with \cite[thm 0.2]{saito2},  imply that 
$R^1(\pi|U)_* \cL$ is an admissible variation of mixed Hodge structure
such  that
$$Gr^p_F R^1 (\pi_U)_* \cL \otimes \C \cong R^i\pi_*(Gr_F^pV\xrightarrow{Gr\nabla}  Gr_F^{p-1}\otimes \Omega_{Y/\Lambda}^1(\log S))$$
When $\Lambda$ is point, these results go back to  Zucker \cite{zucker}, who showed additionally that in the geometric case,
 the mixed Hodge structure 
on $H^i(U, \cL)$    coincides with the one coming from the Leray spectral sequence.

Given  an admissible  variation of
mixed Hodge structure
 $(V_U, F_U^\dt,\nabla_U,\ldots)$ as above,
we  will describe an associated  ``Kodaira-Spencer" map following the method of Katz and
Oda \cite{ko}. 
It is convenient to pass to the graded 
Higgs bundle $E=\bigoplus E^p$, where  $E^p=Gr_F^pV$, with Higgs field $\theta:E\to E\otimes
\Omega_Y^1(\log S)$ given by the sum of $Gr^p_F(\nabla): Gr^p_FV\to
Gr^{p-1}_FV$,  so that $\theta$ has degree $-1$.   To simplify notation, let us write $\lOmega_Y^p =
\Omega_Y^p(\log S)$ etc.
The pair $(E,\theta)$  is indeed a Higgs bundle in the sense that
$\theta\wedge \theta =0$. Therefore it extends to give a complex
\begin{equation}
  \label{eq:HiggsdeRham}
E\xrightarrow{\theta} E\otimes \lOmega_Y^{1}\xrightarrow{\theta} E\otimes \lOmega_Y^{2}\ldots  
\end{equation}
We define a filtration 
$$K^r(V\otimes \lOmega_Y^{i}) = V\otimes (\lOmega_Y^{i-r}\wedge \pi^*\lOmega_\Lambda^r)$$
One can check that this is a  filtration  by subcomplexes of   the de Rham complex $(V\otimes \lOmega_Y^\dt,\nabla) $.
The connecting map associated to
\begin{equation}
\label{eq:K1K2}
0\to K^{1}/K^{2}\to K^0/K^{2}\to K^0/K^{1}\to 0
\end{equation}
induces  a Gauss-Manin connection
$$\R^1\pi_*(V \otimes \lOmega^\dt_{Y/\Lambda})\cong \R^1\pi_* K^0/K^1\to \R^2\pi_* K^1/K^2\cong
\R^1\pi_*(V\otimes \lOmega^\dt_{Y/\Lambda})\otimes \Omega_\Lambda^1$$
The associated graded of this with respect to $F$ is the Kodaira-Spencer map $\kappa(V)=\sum\kappa_p(V)$.
We can construct $\kappa_p(V)$ directly by applying  $Gr^{p}_F$ to \eqref{eq:K1K2} to obtain
$$ 0\to [0\to E^{p-1}\otimes \lOmega_\Lambda^1\ldots ]\to [E^{p}\to E^{p-1}\otimes \lOmega_Y^1\ldots]\to [E^{p}\to 
E^{p-1}\otimes \lOmega_{Y/\Lambda}^1\ldots]\to 0$$
and then forming the connecting map
$$\R^1\pi_*(E^p\xrightarrow{\theta} E^{p-1}\otimes \lOmega^1_{Y/\Lambda}\ldots)\xrightarrow{\kappa_p(V)}
\R^1\pi_*(E^{p-1}\xrightarrow{\theta} E^{p-2}\otimes \lOmega^1_{Y/\Lambda}\ldots )\otimes \Omega_\Lambda^1$$
Let $\kappa_{p,\lambda}(V)$ denote the fibre of this map at $\lambda$.

We will now describe  the basic set up, that  will be fixed for the remainder of this section
We will be interested in families of elliptic surfaces. More precisely,
let $\E\xrightarrow{f} Y\xrightarrow{\pi} \Lambda$ be a pair of flat projective morphisms, $S\subset Y$ a relative divisor and
$\sigma:Y\to \E$ a section, such that:
\begin{enumerate}
\item $\Lambda$ is smooth.
\item $Y\to \Lambda$ is a smooth relative curve, and the restriction $S\to \Lambda$ is \'etale.
\item For each $\lambda\in \Lambda$, the fibre $f_\lambda:\E_\lambda\to Y_\lambda$ is a relatively minimal nonisotrivial elliptic surface.
\item For each $\lambda\in \Lambda$, $S_\lambda\subset Y_\lambda$ is the discriminant of $f_\lambda$.
\item The data $\E\to Y\supset S$ is topologically locally trivial over
  $\Lambda$. In particular, the sheaves $R^i(\pi|_U)_* (R^1f_*\C)_U$ are locally constant for all $i$, where $U= Y-S$.
\end{enumerate}

The main results of Saito  \cite[thm 0.1]{saito2} shows that $\R (\pi|_U)_* R^1f_*\C$ can be lifted to the derived category
of mixed Hodge modules. Condition (5) above, when combined with \cite[thm 0.2]{saito2}, ensures  that the mixed Hodge module
associated to $R^i(\pi|_U)_* R^1f_*\C$ is an admissible variation of mixed Hodge structure. Choosing $\lambda\in \Lambda$,
we have the Kodaira-Spencer map
\begin{equation}\label{eq:kappa}
\kappa_\lambda = \kappa_{1,\lambda}(R^1f_{\lambda*}\Q): Gr^1_F H^1(U_\lambda, R^1f_{\lambda*}\C)\to
 Gr^0_F H^1(U_\lambda, R^1f_{\lambda*}\C)\otimes \Omega^1_{\lambda}
\end{equation}

Fix $\lambda\in \Lambda$ and an integer $n>0$.
Let $V_\lambda$ be the preimage of $U_\lambda$ in the $n$-fold fibre product 
$T_\lambda=\E_\lambda \times_{Y_\lambda} \E_\lambda\times_{Y_\lambda}\ldots \E_\lambda $.
Suppose that  $f_\lambda$ is semistable,  
which means that the singular fibres  have Kodaira type ${}_1I_b$, i.e. they
are reduced polygons of rational curves.
Then $T_\lambda$ has singularities which are local analytically given by
  $$x_1y_1=x_2y_2=\ldots $$
  This can be resolved by blowing up an explicit monomial ideal
  \cite[lem 5.5]{deligneM} to obtain a nonsingular variety $X_\lambda$.
This is  a special case of   Mumford's toroidal resolution  \cite[p
94]{toroidal}.  We  let $F_\lambda$  denote the projections $V_\lambda\to Y_\lambda$  and $X_\lambda\to Y_\lambda$ (and this should
cause no confusion).

\begin{thm}\label{thm:main}
 With the notation and assumptions as above, suppose furthermore that
 for some
 $\lambda\in \Lambda$, the Kodaira-Spencer map $\kappa_\lambda$, defined in \eqref{eq:kappa}, is injective.
 Then the Hodge conjecture holds for $V_\lambda$, when $\lambda$ is very general.
 In addition, when $f_\lambda$ is semistable,
  the Hodge conjecture holds for the toroidal resolution $X_\lambda$.
\end{thm}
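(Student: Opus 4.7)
The plan is to apply Theorem~\ref{thm:leray} to $F_\lambda\colon V_\lambda\to Y_\lambda$. Since $\dim Y_\lambda=1$ we have $m=1$ and $r=n$, so the parallelogram $Q$ consists only of integer points $(p,i)$ with $i\in\{0,1\}$ and $p$ in a finite range determined by $2p-n\le i\le 2p$. The Hodge conjecture for $V_\lambda$ thus reduces to checking that $\Box^{p,0}$ and $\Box^{p,1}$ surject onto their respective Hodge subspaces for these finitely many $(p,i)$; the two rows are handled by quite different methods.

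For the row $i=0$, the target $H^0(U_\lambda,R^{2p}F_{\lambda*}\Q)$ identifies with the space of monodromy invariants in $H^{2p}$ of the fibre $F_\lambda^{-1}(y)=E_y^n$, where $E_y=f_\lambda^{-1}(y)$. Nonisotriviality of $f_\lambda$ forces the geometric monodromy on $H^1(E_y,\Q)$ to have Zariski closure $SL_2$, so by K\"unneth and classical $SL_2$-invariant theory the invariant subspace is generated by symplectic contractions of pairs of $H^1$-factors together with the trivial Tate-twist factors $H^0(E_y)$ and $H^2(E_y)$. Each such generator is represented geometrically by a diagonal, a fibre class, or a pullback of $\sigma$ inside $E_y\times\cdots\times E_y$, all of which extend to algebraic subvarieties of $T_\lambda$ and restrict to algebraic cycles on $V_\lambda$. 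Hence $\Box^{p,0}$ in fact surjects onto the entire target, and in particular onto its Hodge subspace.

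For the row $i=1$ I will show that at a very general $\lambda$ the group $\Hodge\bigl(H^1(U_\lambda,R^{2p-1}F_{\lambda*}\Q)(p)\bigr)$ itself vanishes, so $\Box^{p,1}$ is trivially surjective. By Lemma~\ref{lemma:HodgeFun}(1) it suffices to prove $Gr^p_F H^1(U_\lambda,R^{2p-1}F_{\lambda*}\C)=0$. The crux is to upgrade injectivity of $\kappa_\lambda$ on $R^1 f_{\lambda*}\C$ to injectivity of the analogous cohomological Kodaira-Spencer map on $R^{2p-1}F_{\lambda*}\C$. The K\"unneth decomposition writes $R^{2p-1}F_{\lambda*}\C$ as a sum of tensor products of copies of $R^1 f_{\lambda*}\C$ with the Tate summands $R^0 f_{\lambda*}\C$ and $R^2 f_{\lambda*}\C$, and the Higgs field on each summand is the Leibniz sum of the individual Higgs fields, with the Tate-twist factors contributing zero. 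Computing $Gr^p_F H^1(U_\lambda,-)$ through the logarithmic Higgs and de Rham complexes of~\eqref{eq:HiggsdeRham} and~\eqref{eq:DRM}, injectivity of $\kappa_\lambda$ on each active factor should yield injectivity of the full cohomological Kodaira-Spencer. Once this is in hand, the standard argument applies: by the Cattani-Deligne-Kaplan theorem the Hodge locus in $\Lambda$ is a countable union of proper analytic subvarieties, so at a very general $\lambda$ any Hodge class in the target extends to a local flat section of the VMHS over $\Lambda$; such a flat section sits in the kernel of the cohomological Kodaira-Spencer (since $\nabla$ annihilates it), hence vanishes. The main obstacle is the propagation step: at the pointwise level the Leibniz Higgs field on a tensor product is generally not injective even when its factors are, since cancellations between Leibniz summands produce kernels, so one must either work at the cohomological level directly or exploit the $SL_2$-structure of the monodromy to rule out such cancellations from $H^1(U_\lambda,-)$.

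The second statement follows by combining the Hodge conjecture for $V_\lambda$ with Lemma~\ref{lemma:localization}. In the semistable case, Mumford's toroidal resolution of the local models $x_1y_1=x_2y_2=\cdots$ realizes $X_\lambda\setminus V_\lambda$ as a simple normal crossings divisor whose components and strata are toric varieties (or toric bundles over smooth bases associated to the singular fibres of $f_\lambda$). These are of Hodge-Tate type with cohomology generated by algebraic cycles, so the Hodge conjecture holds for each component, and by iterated application of Lemma~\ref{lemma:localization} to Mayer-Vietoris pieces, for all of $X_\lambda\setminus V_\lambda$. A final application of Lemma~\ref{lemma:localization} to the projective variety $X_\lambda$ with open piece $V_\lambda$ and closed complement $X_\lambda\setminus V_\lambda$ then yields the Hodge conjecture for $X_\lambda$.
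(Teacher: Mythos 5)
Your overall architecture matches the paper's: reduce to $\Box^{p,0}$ and $\Box^{p,1}$ via theorem~\ref{thm:leray}, handle $i=0$ by K\"unneth, $SL_2$-invariant theory and explicit diagonal/section classes (this is exactly lemma~\ref{prop:H0}), kill the $i=1$ Hodge classes at very general $\lambda$ by a Noether--Lefschetz argument driven by injectivity of a Kodaira--Spencer map (lemma~\ref{lemma:Klambda}), and dispose of $X_\lambda\setminus V_\lambda$ by its Tate-type stratification and lemma~\ref{lemma:localization}. However, the step you yourself flag as ``the main obstacle'' --- upgrading injectivity of $\kappa_{1,\lambda}$ on $R^1f_{\lambda*}\C$ to injectivity of the relevant Kodaira--Spencer map on $R^{2p-1}F_{\lambda*}\C$ --- is a genuine gap, and it is precisely the technical heart of the paper's proof (lemma~\ref{lemma:symmetric}). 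The paper's resolution is not to fight the Leibniz cancellations on the full tensor power, but to first decompose $L^{\otimes(2p-1)}$ into $GL_2$-irreducibles $S^{2p-1}L\oplus S^{2p-3}L(-1)^{\oplus m_1}\oplus\cdots$ and apply lemma~\ref{lemma:Klambda} to each summand separately. For a symmetric power of the rank-two Higgs bundle $E=M^\vee\oplus M$, every graded piece $Gr^a_F S^NE$ is a \emph{line bundle}, so the degree $-1$ Higgs field between consecutive graded pieces is a single nonzero multiple of a power of $\phi$ and no cancellation can occur; concretely, projection identifies $Gr^p_F(K^0/K^2)(S^{2p-1}L)$ in the relevant degrees with $Gr^1_F(K^0/K^2)(L)$, so the connecting map $\kappa_{p,\lambda}(S^{2p-1}L)$ is literally identified with $\kappa_{1,\lambda}(L)$. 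Without this (or an equivalent) argument, your claim that ``injectivity of $\kappa_\lambda$ on each active factor should yield injectivity of the full cohomological Kodaira--Spencer'' is unproven, and the $i=1$ case does not close.

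Two smaller points. First, your reduction ``it suffices to prove $Gr^p_FH^1(U_\lambda,R^{2p-1}F_{\lambda*}\C)=0$'' is a sufficient condition but not the one that gets established, and it is false in general: for the Hoyt and Dolgachev families covered by the theorem one has $Gr^1_FH^1(U_\lambda,R^1f_{\lambda*}\C)\neq 0$, so the group you propose to kill does not vanish; what vanishes (for very general $\lambda$) is only its space of rational Hodge classes, which is exactly what your subsequent variational argument targets. Second, before invoking theorem~\ref{thm:leray} you should note, as the paper does, that nonisotriviality forces $U_\lambda\subsetneqq Y_\lambda$, hence $H^i(U_\lambda,R^jf_{\lambda*}\Q)=0$ for $i>1$; your description of $Q$ is correct but this is the reason only the rows $i=0,1$ carry any content. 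The treatment of the boundary $X_\lambda\setminus V_\lambda$ is in line with what the paper leaves implicit and is fine.
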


We will give the proof after some preliminaries.
Fix $\lambda\in \Lambda$.
The local system  given by restricting $L= L_\lambda=R^1f_{\lambda*}\Q$ to $U_\lambda$ underlies a polarizable variation
of Hodge structures of type $\{(1,0), (0,1)\}$.
Zucker  \cite{zucker} constructed  mixed Hodge structures  on the various cohomologies
of $L$.
The next lemma shows that the key constituents of these  Hodge structures  coincide. 

\begin{lemma}\label{lemma:compareHS}
  We have isomorphisms
  \begin{equation}\label{eq:IH1}
  H^1(Y_\lambda, L)\cong  IH^1(Y_\lambda,L) \cong  Gr_2^W   H^1(U_\lambda,L)
\end{equation}
 and
   \begin{equation}\label{eq:IH1b}
  Gr^1_F H^1(Y_\lambda, L)\cong  Gr^1_F IH^1(Y_\lambda,L) \cong  Gr^1_F   H^1(U_\lambda, L)
  \end{equation}
  
  \end{lemma}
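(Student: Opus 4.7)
The plan is to identify both $H^1(Y_\lambda, L)$ and $IH^1(Y_\lambda, L)$ with $H^1(Y_\lambda, j_*L)$, where $j \colon U_\lambda \hookrightarrow Y_\lambda$, and then to compare with $H^1(U_\lambda, L)$ via the five-term exact sequence of the Leray spectral sequence for $j$:
$$0 \to H^1(Y_\lambda, j_*L) \to H^1(U_\lambda, L) \to H^0(S_\lambda, R^1 j_* L) \to H^2(Y_\lambda, j_*L).$$
The equality $IH^1(Y_\lambda, L) = H^1(Y_\lambda, j_*L)$ is the standard computation of the intermediate extension on a smooth curve, namely $j_{!*}(L[1]) = (j_*L)[1]$. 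The equality $H^1(Y_\lambda, L) = H^1(Y_\lambda, j_*L)$ reflects the fact that, for an elliptic surface, the global sheaf $R^1 f_{\lambda*}\Q$ agrees with $j_*L$: both stalks are the monodromy invariants (isomorphic to $\Q$) at $I_n$ fibres and both vanish at the other singular fibres.

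The heart of the argument will be to pin down the MHS on the residue target $H^0(S_\lambda, R^1 j_*L)$. Stalkwise, $R^1 j_*L|_s$ is isomorphic to the coinvariants of local monodromy on the limit mixed Hodge structure of $L$ at $s$, tensored with $\Q(-1)$ (the Hodge structure on the homology class of a small loop around $s$). Kodaira's classification of singular fibres then dichotomizes as follows. At a non-semistable fibre the local monodromy has finite order with nontrivial eigenvalues, so the coinvariants vanish and $R^1 j_*L|_s = 0$. At an $I_n$ fibre the monodromy is unipotent with $N := \log T$ satisfying $N \ne 0$, $N^2 = 0$; Schmid's nilpotent orbit theorem puts the monodromy weight filtration, centered at weight $1$, on the limit structure, with coinvariants of Hodge type $(1,1)$. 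After the Tate twist, $R^1 j_*L|_s$ is therefore pure of weight $4$ and Hodge type $(2,2)$ in all cases.

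Consequently $H^0(S_\lambda, R^1 j_*L)$ is pure of weight $4$ and concentrated in Hodge type $(2,2)$, while $IH^1(Y_\lambda, L) = H^1(Y_\lambda, j_*L)$ is pure of weight $2$ by Zucker--Saito. Strictness of $W$ applied to the five-term sequence will then force $Gr^W_2 H^1(U_\lambda, L) = IH^1(Y_\lambda, L)$, proving \eqref{eq:IH1}. For \eqref{eq:IH1b}, Hodge type $(2,2)$ on the cokernel gives $F^1 = F^2$ there, so $Gr^1_F$ of the cokernel vanishes; strictness of $F$ then yields $Gr^1_F H^1(U_\lambda, L) = Gr^1_F IH^1(Y_\lambda, L)$. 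The main obstacle is the stalkwise computation at $I_n$ fibres: one has to invoke Schmid's theorem to identify the coinvariants with the top graded piece of the monodromy weight filtration, and carefully track the Tate twist $(-1)$ arising from the fundamental cycle of the puncture.
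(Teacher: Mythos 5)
Your proof is correct and follows essentially the same route as the paper: identify $L$ with $j_*j^*L$ stalkwise via Kodaira's tables, compare $IH^1(Y_\lambda,L)=H^1(Y_\lambda,j_*L)$ with $H^1(U_\lambda,L)$ through the exact sequence whose third term is the local cohomology at the punctures, and compute that term to be $\Q(-2)$ at $I_n$ fibres and $0$ elsewhere using the limit mixed Hodge structure, so that weight and $F$-strictness give \eqref{eq:IH1} and \eqref{eq:IH1b}. (One tiny slip: at ${}_1I_n^*$-type fibres with $n\ge 1$ the local monodromy has infinite order, but since $1$ is not an eigenvalue the coinvariants still vanish, so your conclusion is unaffected.)
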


  \begin{proof}
    For notational simplicity, we write $U$ and $Y$ instead of
    $U_\lambda$ and $Y_\lambda$ in the proof.
    Let $j:U\to Y$ denote the inclusion.
  We have natural morphisms
  $$ L\xrightarrow{\iota} j_*j^* L\xrightarrow{\iota'} \R j_* j^* L$$
  which are clearly isomorphisms over $U$.
  We claim that the first map $\iota$ is an isomorphism  at each $s\in S$. Let
  $s'\in Y$ be a point close to $s$, and $T_s:H^1(X_{s'})\to
  H^1(X_{s'})$ the monodromy about $s$. The map $\iota_s:L_s\to j_*j^*
  L_s$ can be identified with the specialization map $H^1(X_s)\to H^1(X_{s'})^{T_s}$.
  One can check that $\iota_s$ is an isomorphism by proceeding case by case
  through Kodaira tables \cite[p 565, p 604]{kodaira} (or apply semistable reduction to reduce to  checking for $I_n$ type).
  The ``cone" of the second morphism $\iota'$ is  $R^1 j_* j^* L[-1]$,
  which we decompose as $\bigoplus_s V_s[-1]$ where $V_s=H^1(X_{s'})/\im(T_s-1)$. Therefore
  we get an exact sequence
  \begin{equation}
    \label{eq:IHHV}
  0\to IH^1(Y,L) \xrightarrow{\iota'}  H^1(U, L)\to \bigoplus_s V_s    
  \end{equation}
A direct calculation using Kodaira's tables shows that $\dim V_s=1$ when the fibre is of  type $I_n$ (which
is semistable), and zero
in other cases.  To proceed further, we need to recall some of facts about Zucker's mixed Hodge structure
 \cite[\S 13]{zucker}. The lowest weight of $H^1(U, L)$ is $W_2$, and it
 coincides  with image of $\iota$ in \eqref{eq:IHHV}. This is enough
 to prove that  \eqref{eq:IH1} holds.

A careful reading of  \cite[\S 13]{zucker} or
specializing  the formulas on
\cite[pp 515-516]{sz} to the pure case shows that
the spaces $V_s$ are equipped
with  mixed Hodge structures compatible with \eqref{eq:IHHV}. These
Hodge structures are 
constructed  so that for $m>0$,
 $$Gr_{2+m}^W   H^1(U, L) = \bigoplus_s Gr_{2+m}^W V_s
 \cong \bigoplus_s Gr^W_{m} [\Psi_s/( T_s-I) \Psi_s](-1)$$
 where $\Psi_s$ is the   limit mixed Hodge structure associated to $L$
 at $s$.   
This implies that $\dim Gr_{4}^WV_s = \dim V_s=1$
at a semistable fibre.
Therefore for dimension reasons, $V_s=\Q(-2)$. Since $Gr_F^1 \C(-2)=0$,
we get \eqref{eq:IH1b}.
\end{proof}

 Let $M=f_{\lambda*}\omega_{\E_\lambda/Y_\lambda}$.
By  Grothendieck duality, we have a canonical isomorphism $ R^1f_{\lambda*}\OO\cong M^\vee$.
The corresponding graded Higgs bundle $(E,\theta:E\to E\otimes \lOmega^1_{Y_\lambda})$,
is given by $E^0= M^\vee$, $E^1= M$,
$E=E^0\oplus E^1$,
  $$\theta= 
\begin{pmatrix}
 0 & 0\\ \phi & 0
\end{pmatrix}
$$
 where $\phi:E^1\to E^0\otimes \lOmega^1_{Y_\lambda}$ is given by cupping with the
Kodaira-Spencer class associated to $f_\lambda$.
Since this map is assumed to be nonisotrivial, $\phi$ is nonzero.

Let us analyze the tensor power $L^{\otimes N}$, which is a variation of Hodge structure of weight $N$. 
 The corresponding graded Higgs bundle is $(E,\theta)^{\otimes N}$.
In more explicit terms, the underlying vector bundle is a sum of
\begin{equation}\label{eq:EI}
E^I = E^{i_1}\otimes E^{i_2}\ldots= M^{\otimes  2(i_1+i_2+\ldots)-N}
\end{equation}
where $I=(i_1,i_2,\ldots)\in \{0,1\}^{N}$. This is graded by $i_1+i_2+\ldots$.
If $J= (i_i,\ldots, i_k-1, \ldots i_{N})\in \{0,1\}^{N}$, then
let
$$\phi_{IJ}:E^I\to E^J\otimes  \lOmega^1_{Y_\lambda}$$
be given by the tensor product of 
$$\phi:E^{i_k}\to E^{i_k-1}\otimes \lOmega^1_{Y_\lambda}$$
and the identity on the other factors.
Given two vectors $v=(a_1, a_2,\ldots),$ and $w=(b_1, b_2,\ldots)$ in
$\{0,1\}^N$, their Hamming distance $d(v,w)$ is the cardinality of the
set $\{i\mid a_i\not=b_i\}$.
We define $\phi_{IJ}=0$ if $d(I,J)>1$.
The Higgs bundle
\begin{equation}
  \label{eq:tensorHiggs}
(E,\theta)^{\otimes N} = \left(\bigoplus_{I\in \{0,1\}^N}
E^I,\bigoplus_{I,J} \phi_{IJ}\right)
\end{equation}

These formulas extend to  symmetric powers.
We note that the symmetric group on $N$ letters  $S_N$  acts on the
right of $\{0,1\}^N$ in the obvious way, and on the  right on the tensor power by permuting
factors. We have that
\begin{equation}
  \label{eq:permute}
  \begin{split}
     E^I\cdot \sigma &= E^{I\cdot \sigma}\\
\phi_{I\cdot \sigma, J\cdot \sigma} &=\phi_{IJ}\cdot \sigma
  \end{split}
\end{equation}
We define the symmetric power as the invariant part
\begin{equation}
  \label{eq:SNE}
  S^N(E,\theta) = [(E,\theta)^{\otimes N}]^{S_N}\cong \left(\bigoplus_{I\text{ incr.}} E^I,\bigoplus_{I,J \text{ incr.}} \phi_{IJ}\right)
\end{equation}
where $I,J$ are weakly increasing sequences in $\{0,1\}^N$.
 This is the Higgs bundle corresponding  to $S^NL$.

\begin{lemma}\label{lemma:symmetric}
If $\kappa_{1,\lambda}(L)$ is injective, then $\kappa_{p,\lambda}(S^{2p-1}L)$ is injective 
 for all $p>0$.

 \end{lemma}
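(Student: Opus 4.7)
The plan is to embed $\kappa_{p,\lambda}(S^{2p-1}L)$ into a commutative square with $\kappa_{1,\lambda}(L)$ whose left vertical arrow is an isomorphism, so that injectivity of $\kappa_{1,\lambda}(L)$ forces injectivity of $\kappa_{p,\lambda}(S^{2p-1}L)$ by a diagram chase. From (\ref{eq:SNE}) together with the standard derivation formula for a Higgs field on a symmetric power, the grade-$q$ piece of $S^{2p-1}L$ is the line bundle $M^{2q-(2p-1)}$, and the Higgs field from grade $q$ to grade $q-1$ equals $q\phi$. In particular, the relative Higgs complexes on $Y_\lambda$ entering $\kappa_{p,\lambda}(S^{2p-1}L)$ are
\[
C^p_{S^{2p-1}L}=[M\xrightarrow{p\phi}M^{-1}\otimes\Omega^1_{Y_\lambda}(\log S_\lambda)],\quad C^{p-1}_{S^{2p-1}L}=[M^{-1}\xrightarrow{(p-1)\phi}M^{-3}\otimes\Omega^1_{Y_\lambda}(\log S_\lambda)],
\]
and $(p\cdot\mathrm{id}_M,\mathrm{id})$ gives an isomorphism of complexes $C^p_{S^{2p-1}L}\xrightarrow{\sim} C^1_L$.

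Following the Katz--Oda construction from (\ref{eq:K1K2}), and restricting $\Lambda$ to a smooth curve through $\lambda$ so that $K^2=0$, the map $\kappa_p(S^{2p-1}L)$ is obtained from the long exact sequence of $\mathbb{R}\pi_\ast$ applied to the short exact sequence of complexes on $Y$
\[
0 \to C^{p-1}_{S^{2p-1}L}[-1]\otimes \pi^\ast\Omega^1_\Lambda \to Gr^p_F\bigl(S^{2p-1}L\otimes\Omega^\bullet_Y(\log S)\bigr) \to C^p_{S^{2p-1}L} \to 0,
\]
and likewise for $\kappa_1(L)$ with $p$ replaced by $1$. The central step is to construct a morphism between these two short exact sequences. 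The middle vertical
\[
Gr^p_F\bigl(S^{2p-1}L\otimes\Omega^\bullet_Y(\log S)\bigr)\longrightarrow Gr^1_F\bigl(L\otimes\Omega^\bullet_Y(\log S)\bigr)
\]
is defined to be multiplication by $p$, the identity, and zero in positions $0,1,2$ respectively. Chain-map compatibility in the square $0\to 1$ follows from the derivation formula (the source and target differentials differ by the factor $p$, which is absorbed by the $p\cdot\mathrm{id}$ at position $0$), while in the square $1\to 2$ it is automatic since the target vanishes in position $2$ (because $E^{-1}_L=0$). This middle vertical restricts on the quotient to the isomorphism $(p\cdot\mathrm{id},\mathrm{id}):C^p_{S^{2p-1}L}\xrightarrow{\sim} C^1_L$ and on the sub to the chain map $(\mathrm{id},0):C^{p-1}_{S^{2p-1}L}\to C^0_L = M^{-1}$.

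Taking $\mathbb{R}\pi_\ast$ of this morphism of short exact sequences and using the projection formula yields a commutative square
\[
\xymatrix{
\mathbb{H}^1(Y_\lambda, C^p_{S^{2p-1}L}) \ar[r]^-{\kappa_{p,\lambda}(S^{2p-1}L)} \ar[d]^-{\cong} & \mathbb{H}^1(Y_\lambda, C^{p-1}_{S^{2p-1}L})\otimes\Omega^1_{\Lambda,\lambda} \ar[d] \\
\mathbb{H}^1(Y_\lambda, C^1_L) \ar[r]^-{\kappa_{1,\lambda}(L)} & H^1(Y_\lambda, M^{-1})\otimes\Omega^1_{\Lambda,\lambda}
}
\]
whose left vertical is an isomorphism. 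Any $x\in \ker\kappa_{p,\lambda}(S^{2p-1}L)$ is then carried to an element of $\ker\kappa_{1,\lambda}(L)=0$ by hypothesis, forcing $x=0$. The main technical obstacle is verifying the chain-map property of the middle vertical and its compatibility with the SES morphism; this reduces to the derivation-formula bookkeeping $\theta|_{E^q}=q\phi$ and the vanishing of the position-$2$ piece of $L$'s absolute Higgs complex, after which all higher-degree squares become automatic.
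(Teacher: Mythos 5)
Your argument is essentially the paper's proof: both identify the top two graded pieces of the symmetric-power Higgs bundle as $M$ and $M^{-1}$, build a chain map from the Katz--Oda short exact sequence for $Gr_F^p$ of $S^{2p-1}L$ to the one for $Gr_F^1$ of $L$ which is an isomorphism in degrees $0$ and $1$, and conclude from the resulting commutative square of connecting maps, whose left vertical is an isomorphism. (The discrepancy between your $q\phi$ and the paper's $\phi$ is a harmless normalization of the identification in \eqref{eq:SNE} and affects nothing.) The one step to push back on is the reduction ``restricting $\Lambda$ to a smooth curve through $\lambda$ so that $K^2=0$'': when $\dim\Lambda>1$, injectivity of $\kappa_{1,\lambda}(L)$ into $Gr_F^0V_\lambda\otimes\Omega^1_{\Lambda,\lambda}$ does not imply injectivity of its composition with the projection $\Omega^1_{\Lambda,\lambda}\to\Omega^1_{\Lambda',\lambda}$ for a fixed curve $\Lambda'$, so as written your final diagram chase invokes a hypothesis you do not actually have. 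The gap is easily closed: either choose the curve direction only after fixing a putative nonzero kernel element (its image under the left isomorphism has nonzero $\kappa_{1,\lambda}$ in some direction, and that direction determines $\Lambda'$), or, better, drop the curve altogether and run the identical construction with $K^0/K^2$ --- which is exactly what \eqref{eq:K1K2} already provides and what the paper does --- so that the commutative square holds with the full $\Omega^1_{\Lambda,\lambda}$ and no auxiliary choice is needed.
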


\begin{proof}
Using \eqref{eq:EI} and  \eqref{eq:SNE}, one finds that
$$Gr_F^p(K^0(S^{2p-1}L)/K^2(S^{2p-1}L))\cong M\xrightarrow{\phi} M^{-1}\otimes \lOmega_Y^1\xrightarrow{\phi} M^{-3}\otimes \lOmega_Y^1\wedge \lOmega_\Lambda^1\ldots$$
and
$$Gr_F^1(K^0(L)/K^2(L))\cong M\xrightarrow{\phi} M^{-1}\otimes \lOmega_Y^1$$
 There is a morphism from the first complex to the second
given by projection, and this preserves the subcomplexes induced by
$K^1$. Therefore, we obtain a commutative diagram with short exact rows
 $$
\xymatrix{
 [0\to M^{-1}\otimes \lOmega_\Lambda^1\to \ldots ]\ar[r]\ar[d] & [M\to M^{-1}\otimes \lOmega_Y^1\to\ldots]\ar[r]\ar[d] & [M\to 
M^{-1}\otimes \lOmega_{Y/\Lambda}^1]\ar[d]^{=} \\ 
 [0\to M^{-1}\otimes \lOmega_\Lambda^1\ ]\ar[r] & [M\to M^{-1}\otimes \lOmega_Y^1]\ar[r] & [M\to M^{-1}\otimes \lOmega_{Y/\Lambda}^1]
}
 $$
 The connecting maps fit into  a commutative diagram
 $$
 \xymatrix{
   Gr_F^p H^1(U_\lambda, S^{2p-1}L)\ar[r]^>>>>>>{\kappa_{p,\lambda}}\ar[d]^{=} & Gr_F^{p-1} H^1(U_\lambda, S^{2p-1}L)\otimes \Omega^1_{\lambda}\ar[d]\\
 Gr_F^1 H^1(U_\lambda, L)\ar[r]^{\kappa_{1,\lambda}} & Gr_F^0 H^1(U_\lambda, L)\otimes \Omega^1_{\lambda}
}
$$
It follows that injectivity of $\kappa_{1,\lambda}(L)$ on the bottom implies
injectivity of  $\kappa_{p,\lambda}(S^{2p-1}L)$ on the top.
\end{proof}

Following standard usage, let us say that a point of $\Lambda$ is very general,
if it lies outside a countable union of proper analytic subvarieties.

\begin{lemma}\label{lemma:Klambda}
  Suppose $(\cL, V,\ldots) $ is an admissible rational variation of mixed  Hodge structure over $\Lambda$
  such that for a very general point $\lambda\in \Lambda$, the Kodaira-Spencer map
  $$K=\kappa_{p,\lambda}(V):Gr_F^p V_\lambda\to Gr_F^{p-1} V_{\lambda}\otimes \Omega^1_\lambda$$
  is injective. Then $V_\lambda$ has no nonzero Hodge cycles of weight $2p$ for very general $\lambda$.
\end{lemma}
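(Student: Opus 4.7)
The plan is to show that at very general $\lambda$, every weight-$2p$ Hodge class in $V_\lambda$ extends to a $\nabla$-flat rational section of $\cL$ lying in $F^pV$ at every point of $\Lambda$. Then the Higgs field annihilates its image in $Gr^p_FV$, so injectivity of $K$ forces the section into $F^{p+1}V$, and its projection to the pure quotient $Gr^W_{2p}$ vanishes by a rationality argument.

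For the first step, I would invoke the theorem of the fixed part --- which says that the space $V^{\pi_1(\Lambda)}$ of global flat sections carries a constant sub-MHS --- together with the Cattani--Deligne--Kaplan theorem (extended to the admissible mixed case by Brosnan--Pearlstein): the locus in $\Lambda$ where the Hodge classes in $V_\lambda$ strictly exceed those coming from $\Hodge(V^{\pi_1}(p))$ is a countable union of proper analytic subvarieties. Outside this locus, every Hodge class $v\in V_\lambda$ of weight $2p$ is the value at $\lambda$ of a global flat rational section $s$ of $\cL$ that lies in $F^pV_{\lambda'}$ for every $\lambda'\in\Lambda$.

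By the Katz--Oda description recalled in the text, the Higgs field $\theta:Gr^p_FV\to Gr^{p-1}_FV\otimes\Omega^1_\Lambda$ is computed by lifting a section to $F^pV$, applying $\nabla$ (which, by Griffiths transversality, lands in $F^{p-1}V\otimes\Omega^1_\Lambda$), and projecting. Since $\nabla s=0$, the image $\bar s$ of $s$ in $Gr^p_FV$ satisfies $\theta(\bar s)=0$ identically on $\Lambda$. At any $\lambda$ where $K=\kappa_{p,\lambda}(V)$ is injective --- an open condition that holds generically by hypothesis --- this forces $\bar s(\lambda)=0$, so $v=s(\lambda)\in F^{p+1}V_\lambda$. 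The image $\bar v$ of $v$ in $Gr^W_{2p}V_\lambda$ is then rational (hence real) and lies in $F^{p+1}$. But in a pure Hodge structure of weight $2p$, the subspace $F^{p+1}\cap\overline{F^{p+1}}$ vanishes (the Hodge types $(a,b)$ with $a\geq p+1$ and those with $b\geq p+1$ are disjoint), so $\bar v=0$. Hence $v\in W_{2p-1}V_\lambda$, meaning $v$ contributes nothing as a weight-$2p$ Hodge cycle.

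Letting $\lambda$ range over the intersection of the very general subset from the first step with the open locus where $K$ is injective --- itself very general --- yields the conclusion. The main obstacle is the first step: the analyticity and countability of the Hodge loci in the admissible mixed setting. In the pure case this is Cattani--Deligne--Kaplan, and for admissible VMHS one invokes Brosnan--Pearlstein. Everything else is a formal consequence of Griffiths transversality and the construction of the Higgs field as the graded of the Gauss--Manin connection.
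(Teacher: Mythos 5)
Your proof is correct in substance, and its engine is the same as the paper's: a rational Hodge class is flat, so if it stays in $F^p$ on a neighbourhood of $\lambda$ its image in $Gr^p_F$ is killed by the Higgs field, and injectivity of $\kappa_{p,\lambda}$ then forces it into $F^{p+1}$, where a rational class of weight $2p$ must vanish. Where you differ is in how you certify that a Hodge class at a very general point stays in $F^p$ nearby: you invoke Cattani--Deligne--Kaplan (Brosnan--Pearlstein in the mixed case) together with the theorem of the fixed part, whereas the paper's argument is entirely elementary --- it first replaces $V$ by $Gr^W_{2p}V$, restricts to a contractible neighbourhood where $\cL$ is constant, observes that $\cL_{\lambda_0}$ is a \emph{countable} set and that each Noether--Lefschetz locus $NL_\gamma$ is analytic (the condition $\gamma_\lambda\in F^pV_\lambda$ on a flat $\gamma$ is holomorphic), and then rules out $NL_\gamma=\Lambda$ directly by the Higgs-field computation. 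CDK-type algebraicity is genuine overkill here: only local analyticity of $NL_\gamma$ is needed, and that is immediate. Two smaller points: your claim that the class extends to a \emph{global} flat section lying in $F^p$ at \emph{every} point of $\Lambda$ is both unnecessary (you only evaluate $\theta(\bar s)$ at $\lambda$, so a flat section in $F^p$ on a contractible neighbourhood suffices) and not fully justified as stated, since monodromy invariance of an individual Hodge class does not follow immediately from the fixed-part theorem; and at the end, rather than saying $v\in W_{2p-1}$ ``contributes nothing,'' it is cleaner to note that $\Hodge(W_{2p-1}V(p))=\mathrm{Hom}_{\MHS}(\Q(0),W_{-1}(V(p)))=0$ by strictness, so $v=0$ as a Hodge cycle --- which is why the paper performs the reduction to $Gr^W_{2p}V$ at the outset rather than at the end.
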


\begin{proof}
We can replace $V$ by $Gr_{2p}^W V$ and assume $V$ is pure of weight $2p$.
Fix $\lambda_0\in \Lambda$, and replace $\Lambda$ by a contractible open neighbourhood of $\lambda_0$.
Then (the restriction of) $\cL$ is a constant sheaf of $\Q$-vector spaces. Therefore given $\gamma\in \cL_{\lambda_0}$, it 
extends to section of $\cL$ (denoted by the same symbol) over $\Lambda$.
As $\lambda$ varies, $F^pV_\lambda$ varies holomorphically. Therefore
the locus
$$NL_\gamma = \{\lambda\in \Lambda\mid \gamma_\lambda \text{ is a
  Hodge cycle}\}$$
  is an analytic subvariety of $\Lambda$.  Suppose that $NL_\gamma=\Lambda$ for some
 $\gamma\not=0$.
 Then
 $$K(\gamma)= \nabla(\gamma) \mod F^p =0$$
 This contradicts the hypothesis of the lemma. Therefore $NL_\gamma$ is a proper subvariety.
 Consequently a very general point lies in the complement of $\bigcup_{\gamma\in \cL_{\gamma_0}} NL_\gamma$.
 
\end{proof}

\begin{lemma}\label{prop:H0}
  For any $\lambda$, the cycle map
  $$\Box^{p,0}:Gr^0_LCH^p(V_\lambda)\to H^0(U, R^{2p}F_{\lambda*}\Q)$$
  is surjective.
\end{lemma}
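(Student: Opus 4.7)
The plan is to combine a Künneth decomposition of the fibre cohomology, $SL_2$ invariant theory, and explicit algebraic cycle constructions. First, since $f_\lambda$ is smooth over $U_\lambda$, the restriction of $F_\lambda$ to $V_\lambda$ is smooth proper with connected fibres $F_\lambda^{-1}(y) = E_y^n$, and Künneth gives a decomposition
$$R^{2p} F_{\lambda*}\Q|_{U_\lambda} \;\cong\; \bigoplus_{(a_1,\ldots,a_n),\ \sum a_i = 2p} R^{a_1} f_{\lambda*}\Q \otimes \cdots \otimes R^{a_n} f_{\lambda*}\Q$$
as local systems on $U_\lambda$. Since $R^0 f_{\lambda*}\Q$ and $R^2 f_{\lambda*}\Q$ are constant, computing global sections reduces to computing monodromy invariants in tensor powers of $L := R^1 f_{\lambda*}\Q|_{U_\lambda}$.

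Next I would invoke invariant theory. Nonisotriviality of $f_\lambda$ implies the algebraic monodromy group of $L$ is all of $\mathrm{Sp}(H^1(E_y)) = SL_2$ (the Zariski closure inside $SL_2$ is semisimple by Deligne and cannot be a torus, since the VHS is irreducible). By the first fundamental theorem of invariant theory for $SL_2$, the space of global sections of $L^{\otimes N}$ vanishes for $N$ odd, and for $N = 2m$ is spanned by the classes obtained by partitioning the $2m$ tensor factors into a perfect matching and placing a copy of the symplectic form $\omega \in L \otimes L$ on each pair. Combining this with the Künneth decomposition, $H^0(U_\lambda, R^{2p} F_{\lambda*}\Q)$ is spanned by classes indexed by a subset $T \subseteq \{1,\ldots,n\}$ (factors carrying the Tate class in $H^2$) and a perfect matching on an even-sized subset $S$ of the complement (factors paired via $\omega$), with $|T| + |S|/2 = p$.

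Finally I would realize each such generator by an algebraic cycle. Let $p_k : T_\lambda \to \E_\lambda$ denote the $k$-th projection. The divisor $D_k := (p_k^{-1}\sigma(Y_\lambda)) \cap V_\lambda$ has cohomology class restricting on each fibre to the Tate generator of the $k$-th copy of $H^2(E_y)$. For the symplectic classes, let $\Delta_{ij} \subset V_\lambda$ be the preimage of the relative diagonal of $\E_\lambda \times_{Y_\lambda} \E_\lambda$ under the projection $V_\lambda \to \E_\lambda \times_{Y_\lambda} \E_\lambda$ onto the $(i,j)$-factors. The standard computation of the diagonal class on $E_y \times E_y$ gives $[\Delta_{ij}]|_{E_y^n} = [pt]_i + [pt]_j - \omega_{ij}$, so $\omega_{ij}$ is represented by the algebraic cycle $[D_i] + [D_j] - [\Delta_{ij}]$ pulled back in the other factors by identity. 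Intersection products of these codimension-one cycles produce codimension-$p$ algebraic cycles on $V_\lambda$ whose images under $\Box^{p,0}$ span $H^0(U_\lambda, R^{2p} F_{\lambda*}\Q)$; since $L^0 CH^p(V_\lambda) = CH^p(V_\lambda)$, this gives the desired surjectivity. The main obstacle is the invariant-theoretic input — confirming that the monodromy closure of $L$ is $SL_2$ and that its invariants are precisely the symplectic contractions — after which the cycle construction is essentially formal.
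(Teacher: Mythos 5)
Your proposal is correct and follows essentially the same route as the paper: Künneth decomposition of $R^{2p}F_{\lambda*}\Q$, Zariski density of the monodromy in $SL_2$ plus classical invariant theory to reduce the invariants to products of symplectic contractions, and realization of each contraction by the cycle $[\Delta_{ij}]-[D_i]-[D_j]$ built from the relative diagonal and the section. The only cosmetic difference is that the paper first works out the $n=k=2$ case explicitly to produce the generator $\Delta_0$ and then pulls it back under the projections $p_{ij}$, which is exactly your matching construction.
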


\begin{proof}
  We will suppress $\lambda$ in the argument below.
  Let us write $L=\Q^2$ with the standard action of $SL_2(\Q)$. The
local system $R^1f_*\Q$  can be identified with $L$, with monodromy
given by a homomorphism $\mu:\pi_1(U)\to SL_2(\Z)\subset SL_2(\Q)$. Non-isotriviality
implies that $\im\mu $ is Zariski dense. The K\"unneth formula, allows us
to identify
\begin{equation}
  \label{eq:kunneth}
  R^{k}F_*\Q = \bigoplus_{i_1+\ldots i_n=k}R^{i_1}f_*\Q\otimes
\ldots \otimes R^{i_{n}}f_*\Q
\end{equation}
Let us start with the case of $n=k=2$. Then the decomposition becomes
$$R^2F_*\Q = L^{\otimes 2} \oplus (f_*\Q\otimes R^2f_*\Q)\oplus
(R^2f_*\Q\otimes f_*\Q)$$
So
$$H^0(U, R^2F_*\Q) = H^0( L^{\otimes 2}) \oplus H^0(f_*\Q\otimes R^2f_*\Q)\oplus
H^0(R^2f_*\Q\otimes f_*\Q)\cong \Q^3$$
Let $D\subset \E$ denote a section of $f$, and let $\Delta\subset \E\times_Y \E$ the image of $\E$
under the diagonal embedding. 
Since $[\Delta], p_2^*[D], p_1^*[D]\in H^0(R^2F_*\Q) $ are easily seen to be linearly
independent, they necessarily span this space. A linear  combination of these
classes give a generator 
\begin{equation}\label{eq:Delta0}
\Delta_0= [\Delta]-p_1^*[D]-p_2^*[D]\in
H^0(L^{\otimes 2})=H^0(\wedge^2 L)\cong \Q
\end{equation}

We now turn to the general case. Let $p_i:T\to \E$, and $p_{ij}:T\to
\E\times_Y \E$ also denote the projections
onto the $i$th and $ij$th factors.
When $k=2p$, equation \eqref{eq:kunneth} can be written as
$$R^{2p}F_*\Q = \bigoplus_{q\le p}\bigoplus_\sigma
\sigma((f_*\Q)^{\otimes n-2q-(p-q)}\otimes L^{\otimes 2q}\otimes
(R^2f_*\Q)^{\otimes (p-q)})$$
where $\sigma$ runs over a suitable set of permutations depending on $q$.
It follows that
\begin{equation}
  \label{eq:kunneth2}
  H^0(U, R^{2p}F_*\Q)= \bigoplus_q \bigoplus_\sigma  H^0(U, L^{\otimes
  2q})\otimes H^0(U, R^2f_*\Q)^{\otimes (p-q)}
\end{equation}
Invariant theory \cite[prop F.13]{fh} tells us that $H^0(L^{\otimes 2q})$
is generated by products of  pullbacks of  classes from
$H^0(L^{\otimes 2})$ under various projections $p_{ij}$. This means
that $H^0(U, L^{\otimes 2p})$ is spanned by products of
$p_{ij}^*\Delta_0$.
Therefore the summands on the right of \eqref{eq:kunneth2}
 are generated by classes  of the form $p_{ij}^*\Delta_0$ and $p_k^*D$.
This proves that $H^0(U, R^{2p}F_*\Q)$ is spanned by algebraic cycles.

\end{proof}

\begin{lemma}\label{lemma:Hodge0}
  For very general $\lambda$,
   $$\Hodge(H^1(U_\lambda, R^{2p-1}F_{\lambda*}\C)(p))=0$$
 \end{lemma}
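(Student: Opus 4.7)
The plan is to decompose everything via Künneth and $SL_2$-invariant theory until we land in symmetric powers of $L$, and then invoke Lemmas~\ref{lemma:symmetric} and~\ref{lemma:Klambda}. Write $L = R^1f_{\lambda*}\Q$; since $f_*\Q = \Q$ and $R^2f_*\Q = \Q(-1)$, the Künneth decomposition~\eqref{eq:kunneth} lets us expand
\[
R^{2p-1}F_{\lambda*}\Q \;=\; \bigoplus_{(i_1,\ldots,i_n)} \sigma\Bigl(L^{\otimes a}\otimes \Q(-b)\Bigr),
\]
summed over tuples with $i_j\in\{0,1,2\}$ and $\sum i_j=2p-1$, where $a=\#\{j\mid i_j=1\}$, $b=\#\{j\mid i_j=2\}$, and $\sigma$ rearranges factors. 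Since $2p-1$ is odd, $a$ must be odd.

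Next I would apply Clebsch--Gordan to each $L^{\otimes a}$: as local systems underlying VHS we have
\[
L^{\otimes a} \;\cong\; \bigoplus_{j=0}^{\lfloor a/2\rfloor} \bigl(S^{a-2j}L\bigr)(-j)^{\oplus m_j}
\]
for suitable multiplicities $m_j$, using $\wedge^2 L = \Q(-1)$. Because $a$ is odd, every irreducible constituent is of the form $S^{2m+1}L$ with $m\ge 0$. Collecting everything, $H^1(U_\lambda,R^{2p-1}F_{\lambda*}\Q)$ becomes a direct sum of pieces of the shape $H^1(U_\lambda, S^{2m+1}L)(-k)$ in which, after matching Tate twists through the decomposition, the twist needed to reach weight zero is exactly $p-k=m+1$. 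Thus
\[
\Hodge\bigl(H^1(U_\lambda,R^{2p-1}F_{\lambda*}\C)(p)\bigr)
\;\hookrightarrow\; \bigoplus_{m\ge 0}\Hodge\bigl(H^1(U_\lambda,S^{2m+1}L)(m+1)\bigr)^{\oplus N_m}
\]
for some finite multiplicities $N_m$, with only finitely many $m$ contributing for fixed $p$.

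Now each summand is handled by the Kodaira--Spencer machine. Lemma~\ref{lemma:symmetric} gives that injectivity of $\kappa_\lambda=\kappa_{1,\lambda}(L)$ implies injectivity of $\kappa_{m+1,\lambda}(S^{2m+1}L)$, and Lemma~\ref{lemma:Klambda} then yields that $H^1(U_\lambda,S^{2m+1}L)$ carries no nonzero Hodge cycles of weight $2(m+1)$, i.e.\ $\Hodge(H^1(U_\lambda,S^{2m+1}L)(m+1))=0$, for $\lambda$ outside a countable union of proper analytic subvarieties of $\Lambda$. Taking the union of these countably many bad loci (one per $m$, over the finitely many $m$'s needed for each $p$, and over all $p$) is still a countable union, so the conclusion holds for very general $\lambda$.

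The routine part is Künneth plus $SL_2$-representation theory; the subtle part I expect to be the most delicate is the \emph{Tate twist bookkeeping}---verifying that the twist appearing from the $R^2f_*\Q$ factors combines with the twist $(-j)$ from Clebsch--Gordan so that in each irreducible piece $S^{2m+1}L$ the total twist matches the $m+1$ required to apply Lemma~\ref{lemma:Klambda} at the correct weight. One small caveat is that Lemma~\ref{lemma:symmetric} was stated for $V=S^{2p-1}L$ on a fixed $Y_\lambda$, so I must verify that the variation $\lambda\mapsto H^1(U_\lambda, S^{2m+1}L)$ is an admissible VMHS over $\Lambda$; this follows from condition (5) in the setup together with Saito's stability of admissible VMHS under $R\pi_*$, so that Lemma~\ref{lemma:Klambda} applies.
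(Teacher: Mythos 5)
Your proposal is correct and follows essentially the same route as the paper: decompose $R^{2p-1}F_{\lambda*}\Q$ via K\"unneth and Clebsch--Gordan into Tate twists of odd symmetric powers $S^{2m+1}L$, check the twists line up, and then kill each summand with Lemmas~\ref{lemma:symmetric} and~\ref{lemma:Klambda}. The paper's version is terser (it records only the decomposition of $L^{\otimes 2p-1}$ as $GL_2$-modules and leaves the K\"unneth and twist bookkeeping implicit), so your added details on the $\Q(-b)$ factors, the weight matching, and the admissibility of the family over $\Lambda$ are consistent elaborations rather than a different argument.
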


 \begin{proof}
We will suppress $\lambda$ again.
   We can view $GL_2(\Q)$ as the (big) Mumford-Tate group of the
variation of Hodge structure $L$, or equivalently of a very
general fibre of $L$. We have that  $S^nL(i)= S^nL\otimes (\det
L)^{-i}$ is also a $GL_2$-module. We can decompose
\begin{equation}\label{eq:decomp}
L^{\otimes 2p-1} = S^{2p-1} L \oplus S^{2p-3}L(-1)^{\oplus
  m_1}\oplus\ldots
\end{equation}
into a sum of irreducible $GL_2$-modules.  Lemmas
\ref{lemma:symmetric} and \ref{lemma:Klambda} shows
that
\begin{equation*}
  \begin{split}
    \Hodge(H^1( S^{2p-1} L)(p)) &= 0\\
    \Hodge( H^1( S^{2p-3} L(p-1))&=0\\
    \ldots&
  \end{split}
\end{equation*}

\end{proof}

\begin{proof}[Proof of theorem \ref{thm:main}] Choose $\lambda$ very general.
Since $f_\lambda$ is nonisotrivial, $U_\lambda\subsetneqq Y_\lambda$. Therefore $H^i(U_\lambda, R^j f_{\lambda*}\Q)=0$ when $i>1$.
  The theorem is now a consequence of
  theorem~\ref{thm:leray},  and  lemmas \ref{lemma:localization},
 \ref{prop:H0} and  \ref{lemma:Hodge0}.
\end{proof}

We will now discuss a  few corollaries. 

\begin{cor}
 If $\lambda$ is very general, the Hodge conjecture holds for any desingularization of $\E_\lambda\times_{Y}\E_\lambda\times_{Y}\E_\lambda$.
\end{cor}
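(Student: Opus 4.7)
The plan is to reduce to Theorem \ref{thm:main} together with the classical Hodge conjecture in dimension three by using the localization lemma.

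First I would note that $T_\lambda = \E_\lambda\times_{Y_\lambda}\E_\lambda\times_{Y_\lambda}\E_\lambda$ has dimension $4$, and that its restriction over $U_\lambda$, namely $V_\lambda = F_\lambda^{-1}(U_\lambda)$, is already smooth, since there the three projections are smooth elliptic fibrations. Consequently any desingularization $\mu:X\to T_\lambda$ is an isomorphism over $V_\lambda$, so we may (and do) view $V_\lambda$ as a Zariski open subset of $X$. By Theorem \ref{thm:main} applied with $n=3$, the Hodge conjecture holds for $V_\lambda$ for very general $\lambda$.

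Next let $Z = X \setminus V_\lambda$. Set-theoretically $Z$ is supported over the finite set $S_\lambda\subset Y_\lambda$, so $\dim Z \le 3$. Taking a resolution $\tilde Z \to Z$ produces a smooth projective variety of dimension at most three (possibly with several connected components), and the Hodge conjecture is known for such varieties by the Lefschetz $(1,1)$ theorem together with hard Lefschetz. Theorem \ref{thm:jannsen} then yields the Hodge conjecture for $Z$ itself.

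Finally, since $X$ is smooth projective, $V_\lambda\subset X$ is open, and $Z=X\setminus V_\lambda$ is its complement, with the Hodge conjecture known for both $V_\lambda$ and $Z$, Lemma \ref{lemma:localization} gives the Hodge conjecture for $X$. There is no real obstacle: the point is merely that $\dim T_\lambda =4$, so the boundary added by a desingularization is three-dimensional and hence falls under the classical results, while the nontrivial open part is handled by Theorem \ref{thm:main}. Note that we did \emph{not} need to assume $f_\lambda$ semistable, since the toroidal construction of $X_\lambda$ in Theorem \ref{thm:main} is only used to describe the compactification, whereas here we are free to choose any desingularization of $T_\lambda$.
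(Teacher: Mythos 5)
Your proof is correct and amounts to essentially the same argument as the paper's: the paper cites Remark \ref{rmk:leray} and Corollary \ref{cor:leray}, whose proofs unwind to exactly your combination of Theorem \ref{thm:jannsen}, Lemma \ref{lemma:localization}, and the known Hodge conjecture in dimension $\le 3$, applied once Theorem \ref{thm:main} gives the conjecture for $V_\lambda$. Your version merely removes the redundant round trip through the cycle maps (Hodge conjecture for $V_\lambda$ $\Rightarrow$ surjectivity of $\Box^{2,1}$ $\Rightarrow$ Hodge conjecture for the fourfold), which is a harmless and slightly more direct packaging; your observations that the desingularization may be assumed to be an isomorphism over the smooth open part $V_\lambda$ and that semistability is not needed are both consistent with the paper's intent.
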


\begin{proof}
This follows from the theorem, remark \ref{rmk:leray}, and corollary \ref{cor:leray}.
\end{proof}

Now, let us assume that
$\Lambda =\{\lambda\}$
is a point. Then $f:\E\to Y$ is an elliptic surface.
In this case, the  assumption  that $\kappa_\lambda$ is
injective is equivalent to
$$ Gr^1_F H^1(U, R^1f_*\C)=0$$
This condition  occurs in the literature in  a different form.
 An elliptic surface $f:\E\to Y$  is said to be extremal (c.f. \cite[p 75]{miranda})  if the Picard number $\rho$
  equals the Hodge number $h^{11}$, and the Mordell-Weil group $MW(\E/Y)$
  (which is the group of sections) has  rank $0$. 

  \begin{lemma}\label{lemma:extremal}
  We have
  \begin{equation}\label{eq:IH11st}
\dim Gr^1_F H^1(U, R^1f_*\C)= h^{11}(X)-\rho(X)+ \rank MW(\E/Y) 
\end{equation}
    The surface is  extremal if and only if
  $$Gr^1_F H^1(U, R^1f_*\C)=0$$
  \end{lemma}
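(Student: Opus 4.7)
The plan is to compute both sides of \eqref{eq:IH11st} by pushing everything down to $Y$ and applying the Leray spectral sequence for $f:\E\to Y$, which degenerates at $E_2$ since $f$ is proper (decomposition theorem). First, by lemma \ref{lemma:compareHS} applied with $\Lambda$ a point, the left hand side of \eqref{eq:IH11st} equals $\dim Gr^1_F H^1(Y,R^1f_*\C)$, so the entire computation can be carried out on $Y$.

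The Leray decomposition yields an isomorphism of pure Hodge structures
$$H^2(\E,\Q)\cong H^2(Y,\Q)\oplus H^1(Y,R^1f_*\Q)\oplus H^0(Y,R^2f_*\Q).$$
The first summand is $\Q(-1)$. For the third summand, the natural injection $\Q_Y(-1)\into R^2f_*\Q$ coming from the generic fibre class has skyscraper cokernel, with stalk at a singular point $s$ spanned by $m_s-1$ of the component classes; thus $\dim H^0(Y,R^2f_*\Q)=1+\sum_s(m_s-1)$, and this summand is pure of Hodge type $(1,1)$. Since $H^1(Y,R^1f_*\Q)$ is pure of weight $2$, the space $Gr^1_F H^1(Y,R^1f_*\C)$ coincides with its $(1,1)$-part. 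Taking $(1,1)$-components of the decomposition therefore gives
$$\dim Gr^1_F H^1(Y,R^1f_*\C)=h^{11}(\E)-2-\sum_s(m_s-1).$$
The Shioda-Tate formula $\rho(\E)=2+\sum_s(m_s-1)+\rank MW(\E/Y)$ now rearranges this into \eqref{eq:IH11st}.

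For the second assertion, observe that $h^{11}(\E)-\rho(\E)\ge 0$ since algebraic classes are of Hodge type $(1,1)$, and obviously $\rank MW(\E/Y)\ge 0$. Hence the right hand side of \eqref{eq:IH11st} vanishes if and only if both terms vanish, which is precisely the definition of extremality. The only point requiring genuine care is justifying the Leray decomposition as an isomorphism of Hodge structures; this follows from Zucker's analysis in the curve-base case, or from Saito's theory of mixed Hodge modules combined with the decomposition theorem.
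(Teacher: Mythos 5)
Your proof is correct and follows essentially the same route as the paper: both rest on the decomposition theorem to split $H^2(\E)$ into $IH^1(Y,R^1f_*\Q)$ plus algebraic $(1,1)$ summands of total dimension $2+\sum_s(m_s-1)$, then invoke the Shioda--Tate formula and lemma \ref{lemma:compareHS}. The only (cosmetic) difference is that the paper separates the skyscraper contribution $M\cong\bigoplus_s\Q_s^{m_s-1}[-2]$ from $j_*j^*R^2f_*\Q$ explicitly, whereas you keep $H^0(Y,R^2f_*\Q)$ as a single summand of dimension $1+\sum_s(m_s-1)$.
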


  \begin{proof} 
   By Saito's version of the
decomposition theorem \cite[p 857]{saito}, we can decompose $\R f_*\Q$ 
as a sum of intersection cohomology complexes up to shift in the
constructible derived category, and moreover these complexes underly
pure Hodge modules. 
By restricting this sum to $U$,  we can identify some of 
these components explicitly:
\begin{equation}
  \label{eq:decompA}
\R f_*\Q \cong  \underbrace{\Q}_{f_*\Q}\oplus  j_*j^*R^1f_*\Q[-1]\oplus  \underbrace{\Q}_{j_*j^*R^2f_*\Q}[-2]\oplus
  M
\end{equation}
The, as yet undetermined, term $M$  is supported on the finite set $S$.
This yields a (noncanonical) decomposition
\begin{equation}
  \label{eq:decompB}
H^{2}(X,\Q) \cong   f^*H^2(Y,\Q)\oplus IH^1(R^1f_*\Q)\oplus H^0(Y, j_*j^*R^2f_*\Q)\oplus H^2(M)  
\end{equation}
 The first summand on the
right is spanned by the fundamental class $[X_t]$ of a fibre. The
third summand is spanned by the class $[\sigma]$.
To calculate $M$, we restrict to a point $s\in S$, and observe that $H^*(\R
f_*\Q|_s)$ is the cohomology of the fibre $X_s=f^{-1}(s)$ by proper
base change \cite[p 41]{dimca}. Therefore $M$
gives the excess cohomology not coming from the   preceding terms in \eqref{eq:decompA}. Let
$$X_s=\sum_{i=1}^{m_s} n_{s,i}X_{s,i}$$
be the decomposition into irreducible components. Let $D_s$ be a small disk
centered at $s$, $t\in D_s^*=D_s-\{s\}$, and $\gamma_s\in
\pi_1(D_s^*,t)$ a generator. We see that
$$H^i(X_s,\Q)=
\begin{cases}
  \Q  &\text{if $i=0$}\\
H^1(X_t,\Q)^{\gamma_s} \cong (j_*j^*R^1f_*\Q)_s&\text{if $i=1$}\\
  \Q^{m_s}  &\text{if $i=2$}\\
  0 &\text{otherwise}
\end{cases}
$$
Therefore $M \cong \bigoplus \Q_s^{m_s-1}[-2]$.
From \eqref{eq:decompB}, we deduce that we have
a noncanonical decomposition
\begin{equation}
  \label{eq:decompC}
H^{2}(X) \cong   IH^1(R^1f_*\Q)\oplus \Q[\sigma]\oplus\Q[X_t]  \oplus \bigoplus_{s} \Q^{m_s-1}  
\end{equation}
We can see that the last two summands are spanned by divisor classes
supported on the fibres. Since the decomposition \eqref{eq:decompA}
can be lifted to the derived category of mixed Hodge modules, we see that \eqref{eq:decompC} becomes
an isomorphism of Hodge structures provided that all the summands in \eqref{eq:decompC}  except the
first are viewed as sums of  the  Tate structures $\Q(-1)$.

 Formula \eqref{eq:decompC} implies
 \begin{equation}\label{eq:IH11}
\dim Gr^1_F IH^1(R^1f_*\C)= h^{11}(X) - 2-\sum (m_s-1)
\end{equation}
When combined  with  lemma \ref{lemma:compareHS} and the Shioda-Tate formula
\cite{miranda}
\begin{equation}\label{eq:st}
\rank MW(\E/Y) = \rho(X) - 2-\sum (m_s-1)
\end{equation}
we obtain \eqref{eq:IH11st}.
The second part of the lemma characterizing extremal surfaces follows
immediately from this and lemma~\ref{lemma:compareHS}.
  \end{proof}

 A key set of examples is provided by:
  
  \begin{prop}[Shioda]
    Elliptic modular surfaces are extremal.
  \end{prop}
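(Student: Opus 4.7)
The plan is to apply lemma \ref{lemma:extremal}, which identifies extremality with the vanishing $Gr^1_F H^1(U, R^1f_*\C)=0$, where as usual $U\subset Y$ is the complement of the cusps $S$.

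Let $M = f_*\omega_{\E/Y}$ be the Hodge line bundle, so that the graded Higgs bundle of $R^1f_*\Q|_U$ is $M^\vee \oplus M$ with Higgs field $\phi:M\to M^\vee\otimes \lOmega^1_Y$ given by the Kodaira-Spencer class. The essential classical input for an elliptic modular surface attached to a finite index subgroup $\Gamma\subset SL_2(\Z)$ is the \emph{Kodaira-Spencer isomorphism}
$$M^{\otimes 2}\xrightarrow{\sim} \lOmega^1_Y,$$
which is the geometric incarnation of the identification of weight-two modular forms for $\Gamma$ with logarithmic differentials on the compactified modular curve. Under this identification, $\phi:M\to M^\vee\otimes \lOmega^1_Y\cong M$ is a nonzero scalar multiple of the identity, hence an isomorphism of line bundles.

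Using the Higgs description of the Hodge filtration recorded in \eqref{eq:HiggsdeRham}, applied to $L=R^1f_*\Q$, one obtains
$$Gr^1_F H^1(U, R^1f_*\C)\;\cong\;\HH^1\bigl(Y,\,[M\xrightarrow{\phi} M^\vee\otimes \lOmega^1_Y]\bigr).$$
Since $\phi$ is an isomorphism, this two-term complex is acyclic, so both the $H^0$ and $H^1$ of its hypercohomology vanish. Together with lemma \ref{lemma:extremal}, this yields extremality.

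The only nontrivial ingredient is the Kodaira-Spencer isomorphism $M^{\otimes 2}\cong \lOmega^1_Y$, which is standard for elliptic modular surfaces (one can also phrase the same argument via Eichler-Shimura: $IH^1(Y, R^1f_*\C)$ equals parabolic cohomology and decomposes as $S_3(\Gamma)\oplus \overline{S_3(\Gamma)}$, pure of Hodge types $(2,0)$ and $(0,2)$, so that $Gr^1_F=0$, and then invoke lemma \ref{lemma:compareHS}). Once this input is granted, the argument is a purely formal manipulation of the Higgs/de Rham dictionary already set up in the paper.
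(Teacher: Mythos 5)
Your route is genuinely different from the paper's. The paper disposes of the proposition in one line by quoting Shioda directly: the Mordell--Weil rank is zero by his Theorem 5.2, and $\rho=h^{11}$ by his Remark 7.8, and these two facts are precisely the definition of extremality used here. You instead verify the single equivalent condition $Gr^1_FH^1(U,R^1f_*\C)=0$ from lemma~\ref{lemma:extremal} by computing the hypercohomology of the Higgs complex, with the classical Kodaira--Spencer isomorphism $M^{\otimes 2}\cong\lOmega^1_Y$ as the only outside input. This is more self-contained relative to the machinery the paper has already set up, and when $\Gamma$ is torsion-free it is a complete and rather clean argument: all singular fibres are then of type $I_n$, one has $2\chi(\OO_\E)=2g-2+\#S$, so the nonzero map $\phi$ goes between line bundles of equal degree on a projective curve and is therefore an isomorphism, and acyclicity of the two-term complex gives the vanishing.

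The gap is one of generality. Shioda's elliptic modular surfaces are attached to any finite-index $\Gamma\subset SL_2(\Z)$ with $-I\notin\Gamma$, and such $\Gamma$ may contain elliptic elements of order $3$; the fibres over the elliptic points are additive of type $IV$ or $IV^*$, and those points belong to $S$. At such points both of your inputs fail as stated: $\deg M^{\otimes 2}=2\chi(\OO_\E)$ no longer equals $\deg\lOmega^1_Y=2g-2+\#S$ (for $\Gamma_1(3)$, whose surface has fibres $I_3$, $I_1$, $IV^*$ over $\PP^1$, one gets $2$ versus $1$), so $\phi:M\to M^\vee\otimes\lOmega^1_Y$ cannot be an isomorphism of those particular line bundles; and the identification of the graded pieces of the $[0,1)$-Deligne extension with $f_*\omega_{\E/Y}$ and $R^1f_*\OO$ itself acquires corrections at additive fibres. (The two discrepancies in fact offset each other, and the true Higgs field is still an isomorphism of the correctly twisted bundles, but the argument as written does not show this.) So your main argument establishes the proposition only for torsion-free $\Gamma$, i.e.\ for semistable elliptic modular surfaces. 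Your parenthetical Eichler--Shimura route --- $IH^1(Y,R^1f_*\C)\cong S_3(\Gamma)\oplus\overline{S_3(\Gamma)}$ is purely of Hodge types $(2,0)$ and $(0,2)$, hence $Gr^1_F$ vanishes, and then lemma~\ref{lemma:compareHS} transfers this to $H^1(U,R^1f_*\C)$ --- does work for all $\Gamma$ with $-I\notin\Gamma$ and is the cleaner way to close the gap; note, though, that this is essentially how Shioda proves $\rho=h^{11}$ in the first place, so at that point you are nearly reproducing the citation the paper relies on.
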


  \begin{proof}
   The   Mordell-Weil rank is zero by \cite[thm 5.2]{shioda}, and
   $\rho=h^{11}$ by \cite[rmk, 7.8]{shioda}.
  \end{proof}

  We can now  state the first corollary of theorem \ref{thm:main}.
This corollary applies to elliptic modular surfaces, so
we recover Gordon's theorem \cite{gordon}.

\begin{cor}\label{cor:extremal}
If $f:\E\to Y$ is an extremal   elliptic surface 
  then the Hodge conjecture holds for the complement $V$ of the
  preimage of $S$ in $\E\times_{Y} \E\times_{Y}\ldots  \E$.; it holds for 
  any desingularization of $\E\times_{Y}\E \times_{Y}\E$, and it holds for the toroidal
  resolution $X$ when $f$ is semistable, 
\end{cor}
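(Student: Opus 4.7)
The plan is to deduce the corollary by specializing Theorem~\ref{thm:main} to the case $\Lambda=\{\lambda\}$. The first observation is that then $\Omega^1_\Lambda=0$, so the Kodaira--Spencer map $\kappa_\lambda$ of \eqref{eq:kappa} is automatically the zero map; its ``injectivity'' simply means the vanishing of its source $Gr^1_F H^1(U, R^1 f_*\C)$. By Lemma~\ref{lemma:extremal}, this is precisely the extremality hypothesis. Similarly, the phrase ``very general $\lambda$'' is vacuous when $\Lambda$ is a point.

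With this in place, the statements for $V$ and for the toroidal resolution $X$ (in the semistable case) will follow by running the argument of Theorem~\ref{thm:main} verbatim. The one point requiring care is that the proof of Theorem~\ref{thm:main} passes through Lemma~\ref{lemma:Hodge0}, whose stated proof invokes Lemma~\ref{lemma:Klambda} and hence requires a nontrivial base. What rescues the situation is that the leftmost vertical map in the commutative diagram of Lemma~\ref{lemma:symmetric} is a canonical isomorphism
$$Gr_F^p H^1(U, S^{2p-1}L) \;\cong\; Gr_F^1 H^1(U, L),$$
valid independently of $\dim\Lambda$, since the complexes $Gr_F^p(K^0/K^1)$ and $Gr_F^1(K^0/K^1)$ both collapse to $[M \to M^{-1}\otimes \lOmega^1_Y]$. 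The right-hand side vanishes by extremality, so $Gr_F^p H^1(U, S^{2p-1}L)=0$ for all $p\ge 1$. Combining this with the $GL_2$-decomposition $L^{\otimes (2p-1)}\cong S^{2p-1}L \oplus S^{2p-3}L(-1)^{\oplus m_1}\oplus \cdots$ and the invariant-theoretic reduction already used in Lemma~\ref{lemma:Hodge0}, I conclude $Gr_F^p H^1(U, R^{2p-1}F_*\C)=0$. Lemma~\ref{lemma:HodgeFun}, together with the evident analog of \eqref{eq:IH1b} for $S^{2p-1}L$ in place of $L$, then gives $\Hodge(H^1(U, R^{2p-1}F_*\C)(p))=0$, so the remainder of the proof of Theorem~\ref{thm:main} applies without change.

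For the Hodge conjecture on an arbitrary desingularization $\tilde X$ of the fourfold $\E\times_Y \E\times_Y \E$, I would apply Corollary~\ref{cor:leray} with $m=1$. Its hypothesis asks for the surjectivity of $\Box^{2,1}$ onto $\Hodge(H^1(U, R^3 F_*\Q)(2))$, and this target vanishes by the preceding paragraph at $p=2$, making the hypothesis vacuous and yielding the conclusion at once.

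The only genuine obstacle in the whole argument is verifying $\Hodge(H^1(U, R^{2p-1}F_*\C)(p))=0$ in the point case, where the moving Hodge-loci technique of Lemma~\ref{lemma:Klambda} is unavailable. Fortunately the Higgs-theoretic identification of Lemma~\ref{lemma:symmetric} does not use $\dim\Lambda>0$, so the vanishing reduces cleanly to extremality rather than to density of Noether--Lefschetz complements; everything else is routine bookkeeping.
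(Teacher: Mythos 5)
Your proposal is correct and is essentially the paper's argument: the paper's entire proof of this corollary is the observation that when $\Lambda=\{\lambda\}$ the condition ``very general'' is vacuous (the only proper analytic subset of a point being $\emptyset$), while injectivity of $\kappa_\lambda$ is, as noted just before Lemma~\ref{lemma:extremal}, exactly the vanishing of $Gr^1_F H^1(U,R^1f_*\C)$, i.e.\ extremality, so Theorem~\ref{thm:main} applies verbatim. Your worry that Lemma~\ref{lemma:Klambda} secretly needs $\dim\Lambda>0$ is unfounded --- when $\Omega^1_\Lambda=0$ its injectivity hypothesis simply forces $Gr_F^pV_\lambda=0$, which already excludes $(p,p)$-classes in the weight-$2p$ part --- so your explicit Higgs-theoretic vanishing $Gr_F^pH^1(U,S^{2p-1}L)\cong Gr_F^1H^1(U,L)=0$, while correct, merely re-derives this degenerate case.
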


\begin{proof}
  When $\Lambda=\{\lambda\}$, $\lambda$ is very general since
  $\emptyset$ is the only proper analytic subset.
\end{proof}

  
We next consider   families of  elliptic K3 surfaces with the following features:
  $$\E\xrightarrow{f}\PP^1\times \Lambda=Y\to \Lambda$$
 is  a family of elliptic K3 surfaces over a curve $\Lambda$ such that:
  \begin{enumerate}
\item[(EK1)] $\rank MW(\E_\lambda/\PP^1)=0$ for very general $\lambda$.
\item[(EK2)] There is a fibrewise isogeny between $\E$ and the family of Kummer surfaces
  associated to a selfproduct of a nonisotrivial elliptic surface
  $g:F\to\Lambda$. More precisely, there is an
  algebraic correspondence
  $$Z\in CH^2(\E\times   Km(F\times_\Lambda F/\Lambda))$$
  which  induces a fibrewise  isomorphism between the transcendental parts
  \begin{equation}\label{eq:tr}
H_{tr}^2(\E_\lambda,\Q)\cong H_{tr}^2(Km(F_\lambda\times F_\lambda),\Q)\cong  H_{tr}^2(F_\lambda\times F_\lambda,\Q) 
\end{equation}
as Hodge structures. (Recall that the  transcendental part $H^2_{tr}(\E,\Q)=(NS(\E)\otimes \Q)^\perp$.)
\end{enumerate}
 Examples satisfying (EK1) and (EK2) include Hoyt's  \cite{hoyt} defined by the affine equation
  $$y^2 = t(t-1)(t-\lambda)x(x-1)(x-t)$$
  For $\lambda \in \Lambda= \C-\{0,1\}$, this defines an elliptic K3 surface $\E_\lambda$ over the $t$-line. 
  Hoyt shows that this is birational to the quotient of an explicit Kummer surface $Km(F_\lambda \times F_\lambda)$ by an involution, and that the   rank of the Mordell-Weil group $MW(\E_\lambda/\PP^1)$ is zero unless
  $F_\lambda$ has complex multiplication. 
  
  The families of K3 surfaces constructed by Dolgachev
\cite[Ex. 7.8, 7.9, 7.10]{dolgachev} can also be seen to  satisfy (EK1) and (EK2) above. The first condition is satisfied by construction (see the
remarks preceding \cite[Ex. 7.8]{dolgachev});
for the second use \cite[thm 7.6]{dolgachev}.

\begin{cor}[to theorem \ref{thm:main}]
Let $\E\to \PP^1\times \Lambda$ be a family satisfying (EK1) and (EK2). Suppose that  $\lambda$ is very general.
 The Hodge conjecture holds for $V_\lambda$, and for any desingularization of $\E_\lambda\times_{\PP^1}\E_\lambda\times_{\PP^1}\E_\lambda$.
\end{cor}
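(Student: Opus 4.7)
The plan is to apply Theorem \ref{thm:main} and Corollary \ref{cor:leray} in exactly the same way as in the preceding corollary; the whole task therefore reduces to verifying the one hypothesis of Theorem \ref{thm:main}, namely injectivity of the Kodaira--Spencer map $\kappa_\lambda$ of \eqref{eq:kappa} at very general $\lambda \in \Lambda$.

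First, I would pin down the dimension of the source of $\kappa_\lambda$. For very general $\lambda$ the elliptic curve $F_\lambda$ has $\End(F_\lambda)=\Z$, so $H^2_{tr}(F_\lambda\times F_\lambda)\cong S^2H^1(F_\lambda,\Q)$ has rank $3$; by (EK2) the same rank holds for $H^2_{tr}(\E_\lambda)$, giving $\rho(\E_\lambda)=22-3=19$. Combined with $\rank MW(\E_\lambda/\PP^1)=0$ from (EK1), Lemma \ref{lemma:extremal} yields
$$\dim Gr^1_F H^1(U_\lambda, R^1 f_{\lambda*}\C)=20-19+0=1,$$
so injectivity of $\kappa_\lambda$ reduces to nonvanishing.

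Next, I would transfer the computation to the Kummer family via the correspondence $Z$ of (EK2). Because $Z\in CH^2(\E\times Km(F\times_\Lambda F/\Lambda))$ is a global algebraic cycle, \eqref{eq:tr} upgrades from a fibrewise isomorphism to an isomorphism of admissible variations of Hodge structure over $\Lambda$, which intertwines the Gauss--Manin connections and hence the Kodaira--Spencer maps. The vanishing $\rank MW=0$ combined with decomposition \eqref{eq:decompC} identifies $IH^1(\PP^1, R^1 f_{\lambda*}\Q)$ with $H^2_{tr}(\E_\lambda)$, and Lemma \ref{lemma:compareHS} identifies the source and target of $\kappa_\lambda$ with $Gr^1_F H^2_{tr}(\E_\lambda)$ and $Gr^0_F H^2_{tr}(\E_\lambda)\otimes\Omega^1_\lambda$. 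Thus via $Z_*$, $\kappa_\lambda$ is identified with the Kodaira--Spencer map $Gr^1_F S^2H^1(F_\lambda)\to Gr^0_F S^2H^1(F_\lambda)\otimes\Omega^1_\lambda$.

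Finally, a short Leibniz-rule computation on $S^2V$ with $V=H^1(F_\lambda)$ shows this latter map is essentially $\sigma\otimes\mathrm{id}_{V/F^1V}$, where $\sigma:F^1V\to(V/F^1V)\otimes\Omega^1_\lambda$ is the Kodaira--Spencer map of $V$ itself; since $g:F\to\Lambda$ is nonisotrivial, $\sigma$ is nonzero at very general $\lambda$, and nonvanishing of $\kappa_\lambda$ follows. The main obstacle in this plan is the upgrade of $Z$ from a fibrewise correspondence to a morphism of VHS intertwining Gauss--Manin; this is formal from the algebraicity of $Z$ in families but is the conceptual heart of the reduction. Once injectivity of $\kappa_\lambda$ is established, Theorem \ref{thm:main} yields the Hodge conjecture for $V_\lambda$, and Remark \ref{rmk:leray} together with Corollary \ref{cor:leray} (applied with $m=1$, as in the preceding corollary) give it for any desingularization of the triple fibre product.
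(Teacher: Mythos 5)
Your proposal is correct and follows essentially the same route as the paper: compute $\rho(\E_\lambda)=19$ from (EK2) and the non-CM simplicity of $S^2H^1(F_\lambda)$, use (EK1) with Lemma \ref{lemma:extremal} and Lemma \ref{lemma:compareHS} to identify the relevant piece with $S^2H^1(F_\lambda)$ via $Z$, and then check nonvanishing of the induced Kodaira--Spencer map $\kappa_{1,\lambda}(R^1g_*\Q)\otimes \mathrm{id}$ from nonisotriviality of $g$. The concluding appeal to Theorem \ref{thm:main}, Remark \ref{rmk:leray} and Corollary \ref{cor:leray} is exactly how the paper handles the desingularized triple product.
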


\begin{proof}
 The correspondence $Z$ induces an inlcusion
 $$H_{tr}^2(\E_\lambda,\Q)\cong  H_{tr}^2(F_\lambda\times F_{\lambda},\Q) \subseteq S^2H^1(F_{\lambda},\Q)$$
Let us choose $\lambda$  very general. Then $F_\lambda$ will not have
complex multiplication, which implies that  $S^2H^1(F_{\lambda})$ will be
a simple Hodge structure.  Simplicity forces the  above inclusion to be an
equality.  It follows that
  $\dim H_{tr}^2(\E_\lambda,\Q)=3$, and therefore
  $\rho(\E_\lambda)=19$.
Lemma \ref{lemma:extremal} shows that
  $\dim Gr_F^1 IH^1( R^1 f_{\lambda*}\C)= 1$. The orthogonal complement of
  $IH^1( R^1 f_{\lambda*}\C)$ is spanned by divisors by \eqref{eq:decompC}.
  Therefore  $IH^1( R^1 f_{\lambda*}\C)$ must 
  contain the $(2,0)$ and $(0,2)$ parts of $H^2(\E_\lambda)$. Consequently its dimension is $3$ and
  it meets $H_{tr}^2(\E_\lambda)$. By simplicity of the latter, we
  must have that
  $$IH^1( R^1 f_{\lambda*}\Q) = H_{tr}^2(\E_\lambda)$$
  Therefore by lemma~\ref{lemma:compareHS} we have an isomorphism
  $$Gr_2^WH^1(U_\lambda, R^1 f_{\lambda*}\Q) \cong S^2H^1(F_{\lambda},\Q)$$
  induced by $Z$, and this leads to an isomorphism of
  corresponding variations of Hodge structure, possibly over a
  nonempty Zariski open
  subset of $\Lambda$. Therefore it suffices to  check injectivity of
  the Kodaira-Spencer map
  $$ \kappa_{2,\lambda}(S^2R^1g_*\Q):Gr^2_F S^2H^1(F_{\lambda})\to Gr_F^1 S^2H^1(F_{\lambda})$$
  Using \eqref{eq:SNE} and the formulas preceding it, we can see that
  $ \kappa(S^2R^1g_*\Q )$ is the tensor product 
  $$\kappa_{1,\lambda}(R^1g_*\Q)\otimes id:H^0(F_\lambda,\Omega^1)\otimes H^1(F_\lambda,\OO)\to H^1(F_\lambda,\OO)\otimes H^1(F_\lambda,\OO)$$
 and this is injective for general $\lambda$, because $g$
 is nonisotrivial.

\end{proof}

\begin{rmk}
The conditions  (EK1) and (EK2) force $\rho(\E_\lambda)=19<h^{11}$ for very general
$\lambda$, so these surfaces cannot be extremal.  Conversely, using lemma \ref{lemma:extremal},
we can see that any nontrivial family of elliptic
K3 surfaces to which the theorem applies must have $\rho(\E_\lambda)=19$ generically.
\end{rmk}

\section{Families of abelian varieties over a Shimura curve}

For our purposes, a Shimura variety $Y$ is a quotient of a Hermitian symmetric domain by an arithmetic group.
It can constructed from the following datum:  a  semisimple algebraic group $G$ over $\Q$,
a  homorphism (or conjugacy class of homomorphisms) $h:U(1)\to G^{ad}(\R)$  satisfying the axioms of \cite[4.4]{milne},
and an arithmetic group $\Gamma\subset G(\Q)$. (The group $\Gamma$ is usually required to be a congruence group,
but we will not insist on this.)
 If  $K$ is the stabilizer of $h$ in the identity component  $G(\R)^+$, the corresponding Shimura variety
$Y=\Gamma\backslash G(\R)^+/K$.
For example, given a rational symplectic vector space $(V,Q)$ of dimension $2g$,
let $G=Sp(V,Q)$. We have a Shimura datum, where the set of homomorphisms $h$ corresponds to
 the set of Hodge structures of type $\{(-1,0), (0,-1)\}$ on $V$, polarized by $Q$.
 If we fix a lattice $L\subset V$, such that $Q|_L$ is principal, and let  $\Gamma\subset G(\Q)$, be the subgroup preserving $L$,
 the corresponding Shimura variety is just the moduli space of principally polarized abelian varieties $A_g$.
We say that a Shimura variety $Y$ is of Hodge type if there exists a faithful representation $G\to Sp_{2g}(\Q)$
which extends to a map of Shimura data.  
Then we get an embedding of $Y$ into the moduli space of abelian varieties with level structure.
If level structure is fine, then we can pull back the universal family to get a family of abelian
varieties over $Y$.

%
%

We now describe the main example of interest to us.
Let $K/\Q$ be a totally real number field of degree $d$.
Denote the $d$ distinct embeddings  by $\sigma_i:K\to \R, i=1,\ldots d$.
Fix a quaternion division algebra $D/K$ such  that $D\otimes_{\sigma_1}\R\cong M_2(\R)$
and  $D\otimes_{\sigma_i}\R$ is the Hamilton quaternions $H$ for $i=2,\ldots, d$.
If $\bar x$ denotes conjugation in $D$, then 
$$G=D_1^*=\{x\in D\mid x\bar x=1\}$$
is an algebraic group over $\Q$ whose group of real points  
\begin{equation}\label{eq:D1R}
G(\R)\cong SL_2(\R)\times (SU_2)^{d-1}
\end{equation}
It follows that
\begin{equation}\label{eq:D1C}
G(\C) \cong SL_2(\C)^{d}
\end{equation}
and 
\begin{equation}\label{eq:D1ad}
G^{ad}(\R) \cong PSL_2(\R)\times (PU_2)^{d-1}
\end{equation}
Let $h:U(1)\to G^{ad}(\R)$ be defined by
$$h(e^{i \theta}) = 
\left( \begin{pmatrix}
 \cos\theta/2 &\sin\theta/2\\ -\sin\theta/2 & \cos\theta/2
\end{pmatrix}\mod \pm I, 1\right)
$$
with respect to \eqref{eq:D1ad}.  (For an explanation of $\theta/2$, see \cite[ex 1.10]{milne}.)
The pair $(G,h)$ defines a connected Shimura datum.
The corresponding symmetric space is the upper half plane $\HH$.
To complete the description of the Shimura variety,
 fix a maximal order $\OO$ in $D$,
and a torsion free finite index subgroup of $\Gamma\subseteq \OO_1^*=\OO\cap D_1^*$.
Let $Y= \Gamma\backslash \HH$, where $\Gamma$
acts through projection to $SL_2(\R)$.
{\em We fix this notation for the remainder of this section.}

We first observe  that
$Y$ is a projective algebraic curve, c.f. \cite[chap 9]{shimura}.  We claim that 
the curve $Y$ is of Hodge type, so it would  carry a
universal family of abelian varieties $f:\cA\to Y$. 
An explicit construction of  $\cA$ was given in a  paper of Viehweg-Zuo \cite{vz}.
We outline the construction from section 5 of their paper,
because we will need to extract certain facts which were not explicitly stated.
The correstriction of $D$ is the central simple $\Q$-algebra
$$\Cor(D) = [\bigotimes_i
D\otimes_{\sigma_i}\overline{\Q}]^{Gal(\overline{\Q}/K)}$$
of dimension $4^d$.
In the simplest case, $\Cor(D)$ splits, which means that it is a matrix algebra
$M_{2^d}(\Q)$. Then via the norm homomorphism $Nm:D^*\to \Cor(D)^*$, $V= \Q^{2^d}$ is a representation of $G$.
If $\Cor(D)$ does not split, then it will split over a suitable quadratic field $\Q(\sqrt{b})$.  In this case, 
$V=\Q(\sqrt{b})^{2^d}$ becomes a representation of $G$.
Under \eqref{eq:D1R}, $V\otimes_\Q \R\cong \R^2\otimes_\R V'$, where $\R^2$ is the standard representation
of $SL_2(\R)$, and the second factor 
$$
V'=
\begin{cases}
 \R^{2^{d-1}} &\text{if $\Cor(D)$ splits}\\
 \R^{2^d} &\text{otherwise}
\end{cases}
$$
is a representation of $SU_2^{d-1}$. The representation $V'$  has no trivial factors,
so it carries an $SU_2^{d-1}$-invariant inner product $Q_2$,
which unique up to a scalar factor.
The complex representation
$$
V'\otimes \C\cong
\begin{cases}
 V_2\otimes\ldots \otimes V_{d-1} &\text{if $\Cor(D)$ splits}\\
(V_2\otimes\ldots \otimes V_{d-1})^{\oplus 2}  &\text{otherwise}
\end{cases}
$$
where $V_i$ is the standard representation of $SL_2(\C)$ acting through the $i$th factor of \eqref{eq:D1C}.
Let $Q_1$ denote the standard symplectic form on $\R^2$.
A multiple of $Q_1\otimes Q_2$ will be a $G$-invariant $\Q$ or $\Q(\sqrt{b})$-valued symplectic form  $Q_3$ on $V$. 
Setting $Q=Q_3$ in the first case, or $Q=\tr_{\Q(\sqrt{b})/\Q} \circ Q_3$ in the second gives an invariant $\Q$-valued form.
Hence, we have an injective homomorphism
of algebraic groups $\rho:G\to Sp(V,\Q)$. This  extends to a map of Shimura data, so $Y$ is of Hodge type. Note that  $Sp(V,\Q)$ is
isomorphic to  $Sp_{2^d}(\Q)$ or $Sp_{2^{d+1}}(\Q)$ depending on the cases.
We summarize this discussion, along with some additional facts from \cite[lemma 5.8, p 272]{vz}. 

\begin{thm}[Viehweg-Zuo]\label{thm:vz}
There is an abelian scheme $f:\cA\to Y$ such that:
\begin{enumerate}
\item The relative dimension of $\cA$ is either $2^{d-1}$, if $\Cor(D)$
  splits, or $2^d$ otherwise.
\item The local system $R^1f_*\Q$ corresponds to a representation 
$$
\xymatrix{
 \Gamma\ar[r]^{\rho}\ar[d] & GL(V) \\ 
 G\ar[ru] & 
}
$$
such  that:
\begin{enumerate}
\item The representation of $G(\R)$ on $V\otimes \R$ is a tensor product $\R^2\otimes V'$,
where $\R^2$ is the standard representation of $SL_2(\R)$, and $V'$ is an orthogonal representation of the remaining
factor of \eqref{eq:D1R}.
\item The representation of $G(\C)$ on $V\otimes \C$ is isomorphic to a sum of one (if $\Cor(D)$ splits) or two
copies (otherwise) of $V_1\otimes\ldots\otimes V_d$, where $V_i$ is the standard representation of the $i$th  factor of \eqref{eq:D1C}.
\end{enumerate}

\item The Zariski closure of $\rho(\Gamma)$ and the special Mumford-Tate group of a very general fibre of $f$ are both equal to $G$.

\item Corresponding to the decomposition in (2b), the graded Higgs bundle associated to the variation of Hodge structure $R^1f_*\C$ 
  is a tensor product
  $$(E_1^{0}\oplus E_1^{1},\theta)\otimes (E_2^0,0)\otimes\ldots \otimes (E_d^0,0)$$
  Furthermore, $\theta:E_1^{1}\to E_1^{0}\otimes \Omega_Y^1$ is an
isomorphism.
\end{enumerate}

\end{thm}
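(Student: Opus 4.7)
The plan is to follow the construction outlined in the paragraphs immediately preceding the theorem and extract the four stated consequences. First, I use the correstriction construction to produce the $\Q$-vector space $V$ of $\Q$-dimension $2^d$ or $2^{d+1}$ (according to whether $\Cor(D)$ splits), equipped with the $G$-invariant $\Q$-valued symplectic form $Q$ (built as $Q_3$ in the split case, and as $\tr_{\Q(\sqrt{b})/\Q}\circ Q_3$ otherwise). This yields an injective homomorphism $\rho:G\hookrightarrow Sp(V,\Q)$, and a check on $h$ confirms that $\rho$ extends to a morphism of Shimura data. Pulling back the universal principally polarized abelian scheme over the Siegel modular variety (well-defined because $\Gamma$ is torsion-free) then produces the desired abelian scheme $f:\cA\to Y$. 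Part (1) is immediate, since the relative dimension is $\tfrac12\dim_\Q V$.

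For (2), the local system $R^1f_*\Q$ is by construction the one on $Y$ associated to the dual of the representation $\rho:\Gamma\to GL(V)$ on $H^1$ of the fibre. Items (2a) and (2b) then reformulate the explicit decompositions of $V\otimes\R$ and $V\otimes\C$ that were read off from \eqref{eq:D1R} and \eqref{eq:D1C} during the construction.

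Part (3) has two halves. The Zariski closure of $\rho(\Gamma)$ in $\rho(G)\cong G$ is all of $G$ by Borel's density theorem, since $\Gamma$ is an arithmetic lattice in $G(\R)$ whose projection to the noncompact factor $SL_2(\R)$ of \eqref{eq:D1R} is again a lattice. For the special Mumford-Tate group at a very general fibre: the inclusion into $G$ is formal because the variation is pulled back from a Shimura datum with group $G$, while the reverse inclusion follows from Zariski density of the monodromy combined with the classical fact that the generic Mumford-Tate group contains the connected component of the algebraic monodromy group.

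Finally, for (4), the Hodge structure at $y\in Y$ is determined by $h_y:U(1)\to G^{ad}(\R)$, which by construction lands in the first $PSL_2(\R)$-factor of \eqref{eq:D1ad}. Hence on $V\otimes\C\cong V_1\otimes V_2\otimes\cdots\otimes V_d$ (up to the doubling that occurs when $\Cor(D)$ does not split), $h$ splits only the first factor as $V_1=V_1^{1,0}\oplus V_1^{0,1}$ while acting trivially on the remaining $V_j$; this produces exactly the claimed tensor-product form of the graded Higgs bundle, with $E_1^i$ attached to $V_1^{i,1-i}$ and $E_j^0$ attached to $V_j$ for $j\ge 2$. The main obstacle, and where the real content of the statement lies, is showing that $\theta:E_1^1\to E_1^0\otimes\Omega_Y^1$ is an isomorphism. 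This map is the Kodaira-Spencer map coming from the $SL_2$-factor; it can be identified with the differential of the period map from $Y$ to the one-dimensional period domain $\HH$, which, because $Y=\Gamma\backslash\HH$, is the identity on tangent spaces, giving an isomorphism of line bundles.
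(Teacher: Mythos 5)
Your construction of $f:\cA\to Y$ is exactly the one the paper outlines before the theorem (correstriction of $D$, the representation $V$ with the invariant symplectic form $Q$, the embedding $\rho:G\hookrightarrow Sp(V,Q)$ as a map of Shimura data), and parts (1) and (2) are read off the same way. The difference is in (3) and (4): the paper does not prove these at all, but simply imports them from Viehweg--Zuo (their Lemma 5.8 and p.~272), whereas you supply direct arguments --- Borel density for the Zariski density of $\rho(\Gamma)$ (valid here since $G$ is $\Q$-simple and $G(\R)$ has the noncompact factor $SL_2(\R)$), the Andr\'e--Deligne containment of the connected algebraic monodromy group in the generic special Mumford--Tate group for the reverse inclusion, and the identification of $\theta:E_1^1\to E_1^0\otimes\Omega_Y^1$ with the differential of the period map $Y=\Gamma\backslash\HH\to\Gamma\backslash\HH$, which is tautologically an immersion. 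These arguments are correct and make the statement self-contained where the paper relies on a citation; the only point worth tightening is the bookkeeping in (4) when $\Cor(D)$ does not split, where $V\otimes\C$ is two copies of $V_1\otimes\cdots\otimes V_d$ and one should say how the doubling is absorbed into the unitary factors so that the Higgs bundle is still written as a single tensor product.
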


Let $F:X\to Y$ be the $n$-fold fibre product $\cA\times_Y\ldots \times_Y \cA$.
 We note that $\cA\to Y$, and therefore $F$, is defined over a number field $k$.
 This follows from the theory of canonical models \cite{milne}, although we should
 point out that since we are working with connected Shimura varieties, $k$ might be bigger than the reflex field.

\begin{thm}\label{thm:SCHodge}
  The Hodge and Tate conjectures holds for $X$.
\end{thm}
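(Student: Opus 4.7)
The strategy is to invoke Corollary \ref{cor:tate}. Since $F:X\to Y$ is smooth and proper (as $\cA\to Y$ is), we may take $U=Y$; and since $\dim Y=1$ the parallelogram $Q$ reduces to $i\in\{0,1\}$. Thus both the Hodge and Tate conjectures for $X$ will follow once we verify surjectivity of
$$\Box^{p,i} : Gr^i_L CH^p(X_\C) \otimes \C \longrightarrow Gr^p_F H^i(Y, R^{2p-i} F_* \C)$$
for all $p$ and $i\in\{0,1\}$.

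Case $i=0$: I would adapt Lemma \ref{prop:H0}. By K\"unneth,
$$R^{2p} F_* \Q \;\cong\; \bigoplus_{i_1+\cdots+i_n=2p}\bigotimes_{j=1}^n \wedge^{i_j} R^1 f_* \Q,$$
and since $U=Y$ the group $H^0(Y,R^{2p}F_*\Q)$ coincides with the $\pi_1(Y)$-invariants, which by Theorem \ref{thm:vz}(3) equal the $G$-invariants. By Theorem \ref{thm:vz}(2b), after complexifying the representation splits as $V_1\otimes\cdots\otimes V_d$ (up to doubling) under $G(\C)\cong SL_2(\C)^d$, reducing the invariant theory to classical $SL_2$-invariants of tensor powers, which are generated by symplectic-form contractions on each factor. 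These contractions assemble across factors into products of the polarization form $Q$ on $V$; geometrically, $Q$ is the polarization class of the abelian scheme, algebraic on $\cA\times_Y\cA$. Pulling it back along the pair-projections $p_{ij}:X\to\cA\times_Y\cA$, combined with pullbacks of a section class along $p_i:X\to\cA$, yields algebraic cycles spanning $H^0(Y,R^{2p}F_*\Q)$, and in particular surjecting onto $Gr^p_F H^0$.

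Case $i=1$: I aim to prove the stronger vanishing
$$Gr^p_F H^1(Y, R^{2p-1} F_* \C) = 0$$
for all $p\ge 1$, which makes $\Box^{p,1}$ trivially surjective. The input is Theorem \ref{thm:vz}(4): the graded Higgs bundle of $R^1 f_*\C$ factors as $(E_1^0\oplus E_1^1,\theta)\otimes(E_2^0,0)\otimes\cdots\otimes(E_d^0,0)$, with $\theta:E_1^1\xrightarrow{\sim}E_1^0\otimes\Omega_Y^1$ an isomorphism. Using Saito's description, $Gr^p_F H^1(Y,W)$ for a VHS $W$ on the curve $Y$ is the first hypercohomology of the two-term Higgs complex
$$\bigl[\, Gr^p_F W \xrightarrow{\;\theta_W\;} Gr^{p-1}_F W \otimes \Omega_Y^1 \,\bigr].$$
Decomposing $R^{2p-1}F_*\C$ $G$-equivariantly via K\"unneth and wedge powers into irreducible $G$-summands, and using the fact from Theorem \ref{thm:vz}(2b) that only the first $SL_2$-factor carries a nontrivial Hodge grading, the induced Higgs field $\theta_W$ on each summand with nontrivial $Gr^p_F$-piece reduces to $\theta_1$ tensored with identities on flat factors from $A=E_2^0\otimes\cdots\otimes E_d^0$; hence it is an isomorphism of bundles, the two-term complex is acyclic, and its hypercohomology vanishes. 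This yields the claim.

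The hard part will be Case $i=1$: carrying out the $G$-equivariant decomposition of $\wedge^{i_1}V\otimes\cdots\otimes\wedge^{i_n}V$ into irreducible summands and checking that the induced Higgs field is an isomorphism on each one is a nontrivial piece of multilinear representation theory, and is the natural generalization of Lemma \ref{lemma:symmetric} from $SL_2$ to the larger group $G$ coming from the quaternion algebra; a potential subtlety is that the Higgs derivation on a tensor product need not be an isomorphism of graded pieces before one passes to $G$-isotypic components, so the reduction to irreducible $G$-summands is essential.
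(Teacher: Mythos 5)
Your overall strategy coincides with the paper's: reduce via Corollary~\ref{cor:tate} to surjectivity of $\Box^{p,i}$ for $i=0,1$ (the paper also records the case $i=2$ via hard Lefschetz, but as you observe this is not needed since $Q$ only involves $i\le m=1$), handle $i=0$ by K\"unneth and invariant theory, and handle $i=1$ by proving the vanishing $Gr^p_FH^1(Y,R^{2p-1}F_*\C)=0$ from the Viehweg--Zuo description of the Higgs bundle. Your case $i=1$ is exactly the paper's argument, and the ``hard part'' you fear is not hard: $\wedge^{2p-1}(V^{\oplus n})\otimes\C$ decomposes under $SL_2(\C)^d$ into summands whose first-factor component is an odd symmetric power $S^{2q-1}V_1$ with $q\ge 1$ (forced by parity), so on every summand both $Gr^p_F$ and $Gr^{p-1}_F$ are nonzero and the Higgs field between them is $\theta_1$ tensored with identities on the flat factors, hence an isomorphism of bundles; the two-term complex is acyclic and its hypercohomology vanishes.

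The one genuine gap is in case $i=0$: the assertion that the degree-two invariants ``assemble into products of the polarization form $Q$'' fails when $\Cor(D)$ does not split. In that case $V\otimes\C\cong(V_1\otimes\cdots\otimes V_d)^{\oplus2}$, so $(V\otimes V)^{G(\C)}$ is $4$-dimensional, while the polarization spans only a line; the remaining invariants come from the nontrivial endomorphisms of $\cA/Y$ and are not accounted for by your list of cycles (polarization classes and section classes). The paper avoids identifying these classes explicitly: since $G$ is the special Mumford--Tate group of a very general fibre (Theorem~\ref{thm:vz}(3)), all of $H^0(Y,R^2F_*\Q)(1)$ consists of Hodge classes, so the Lefschetz $(1,1)$ theorem on the smooth projective $X$ already gives surjectivity of $\Box^{1,0}$; Lemma~\ref{lemma:inv} then shows that $H^0(Y,R^{2p}F_*\Q)$ is generated in degree two, and the multiplicativity of the cycle map gives surjectivity of $\Box^{p,0}$ for all $p$. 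You could repair your argument either this way or by adding the graphs of the endomorphisms of $\cA$ to your list of algebraic cycles.
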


The proof follows the same basic strategy as the proof of
theorem~\ref{thm:main}. We first establish the preliminary results.

\begin{lemma}\label{lemma:inv}
  Let $V_i$ be standard representations of $SL_2(\C)$, and let
  $V=V_1\otimes \ldots \otimes V_d$ be viewed as an
  $H=SL_2(\C)^d$-module. Then invariant part of the tensor algebra
$$(T^*V)^H$$
  is  generated by $(V\otimes V)^H$.
  
\end{lemma}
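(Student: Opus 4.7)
The plan is to reduce to the first fundamental theorem of invariant theory for a single $SL_2$ by exploiting the tensor factorization of the representation. Since the $i$-th copy of $SL_2(\C)$ inside $H$ acts nontrivially only on $V_i$, reordering tensor positions yields an $H$-equivariant isomorphism
$$V^{\otimes n} \;\cong\; V_1^{\otimes n} \otimes \cdots \otimes V_d^{\otimes n},$$
and taking $H$-invariants therefore factors as
$$(V^{\otimes n})^H \;\cong\; \bigotimes_{i=1}^d (V_i^{\otimes n})^{SL_2(\C)}.$$

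The substantive step will be to invoke the classical first fundamental theorem for $SL_2(\C)$ --- the same input already used in Lemma~\ref{prop:H0} via \cite[prop F.13]{fh}: the space $(V_i^{\otimes n})^{SL_2(\C)}$ vanishes for $n$ odd, and for $n=2m$ is spanned by the matching contractions obtained by placing the (up-to-scalar unique) generator $\omega_i \in (V_i \otimes V_i)^{SL_2(\C)}$ at the pairs of a perfect matching of $\{1,\ldots,2m\}$.

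Combining these ingredients, every $H$-invariant in $V^{\otimes 2m}$ is a linear combination of tensors of the form $\omega_1^{\pi_1}\otimes\cdots\otimes\omega_d^{\pi_d}$, each arising from the canonical generator $\omega := \omega_1\otimes\cdots\otimes\omega_d$ of the one-dimensional space $(V\otimes V)^H$ by iterated tensor product and reordering of tensor positions. This is the precise meaning of ``generated by $(V \otimes V)^H$'' in the invariant-theoretic setting here, and it matches the way the lemma will be applied in cohomology --- compare the pullback-and-multiply argument around $\Delta_0$ in the proof of Lemma~\ref{prop:H0}, where the different placements of the generator are realized by pulling back along the various projections $p_{ij}$. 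There is no real obstacle: the only non-routine ingredient is the classical $SL_2$ FFT, and the rest is the bookkeeping dictated by the factored action of $H$ on $V$.
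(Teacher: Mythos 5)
Your reduction is the same as the paper's: the $H$-equivariant factorization $(V^{\otimes n})^H\cong\bigotimes_i(V_i^{\otimes n})^{SL_2(\C)}$ followed by the classical first fundamental theorem for a single $SL_2$, citing \cite[prop F.13]{fh}. The paper's proof consists of exactly these two steps, so as far as the route goes you have reproduced the intended argument.

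However, your final paragraph asserts something that is not true: an invariant $\omega_1^{\pi_1}\otimes\cdots\otimes\omega_d^{\pi_d}$ does \emph{not} arise from the generator $\omega=\omega_1\otimes\cdots\otimes\omega_d$ of $(V\otimes V)^H$ by iterated tensor product and reordering of tensor positions unless the matchings $\pi_1,\dots,\pi_d$ coincide. A permutation of the $2m$ slots of $V^{\otimes 2m}$ acts on all $d$ factors of $V=V_1\otimes\cdots\otimes V_d$ simultaneously, so products of copies of $\omega$, however the slots are arranged, only produce the ``diagonal'' invariants $\omega_1^{\pi}\otimes\cdots\otimes\omega_d^{\pi}$. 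These do not span in general: for $d=2$ and $m=2$ there are only three perfect matchings of four points, while $\dim\bigl((V_1^{\otimes 4})^{SL_2}\otimes(V_2^{\otimes 4})^{SL_2}\bigr)=4$; writing $a,b,c$ for the three matching invariants in the two-dimensional space $(V_1^{\otimes 4})^{SL_2}$, with $c$ a linear combination of $a$ and $b$ having both coefficients nonzero, the diagonal elements $a\otimes a$, $b\otimes b$, $c\otimes c$ span only the symmetric square and miss $a\otimes b-b\otimes a$. So either ``generated by $(V\otimes V)^H$'' must be understood to permit contractions performed \emph{independently} in each $V_i$-factor --- which is what the factorization plus the fundamental theorem literally gives, and is a strictly weaker statement than closure under tensor product and slot permutation --- or the mixed-matching invariants need a separate argument. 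To be fair, the paper's own two-line proof is silent on exactly this point, so the gap is inherited rather than introduced; but as written, your justification of the word ``generated'', and hence of the way the lemma feeds into the surjectivity of the product map $S^pH^0(Y,R^2F_*\Q)\to H^0(Y,R^{2p}F_*\Q)$, does not go through.
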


\begin{proof}
  We have an isomorphism
  $$(T^*V)^H= (T^*V_1)^{SL_2(\C)}\otimes \ldots  \otimes (T^*V_d)^{SL_2(\C)}$$
  So the result follows from classical invariant theory \cite[prop F.13]{fh}.
\end{proof}

\begin{prop}
The  cycle maps
  $$\Box^{p,i}: Gr^i_L CH^p(X)\to H^i(Y, R^{2p-i}F_*\Q)$$
  are surjective for $i=0,2$ and all $p$.
\end{prop}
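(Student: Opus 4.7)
The plan is to reduce everything to a statement in classical invariant theory, exploiting Theorem~\ref{thm:vz}. The first step is to apply the Künneth formula to $F = f \times_Y \cdots \times_Y f$, giving
\[
R^{k}F_*\Q \cong \bigoplus_{k_1+\cdots+k_n=k} \bigotimes_j \textstyle\bigwedge^{k_j} R^1f_*\Q,
\]
since the fibres are abelian varieties. By Theorem~\ref{thm:vz}(3), the Zariski closure of the monodromy representation on $V = (R^1f_*\Q)_y$ coincides with $G$, so
\[
H^0(Y, R^{k}F_*\Q) \cong \bigl(\textstyle\bigotimes_j \bigwedge^{k_j} V\bigr)^G.
\]
Combined with Theorem~\ref{thm:vz}(2b) and Lemma~\ref{lemma:inv}, this identifies the total space $\bigoplus_k H^0(Y, R^k F_*\Q)$, as a ring under the Künneth cup product, with the subring of the tensor algebra on $V$ generated by the one-dimensional space of quadratic invariants $(V \otimes V)^G$.

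For $i=0$, I would then exhibit algebraic cycles realizing the quadratic invariants. The relative polarization gives $\Theta \in CH^1(\cA)$, whose fibrewise class lies in $\bigwedge^2 V = (R^2 f_*\Q)_y$. On $\cA \times_Y \cA$, the line bundle class $\Pi = m^*\Theta - p_1^*\Theta - p_2^*\Theta \in CH^1(\cA \times_Y \cA)$ (with $m$ the group law) has fibrewise class in the $V \otimes V$ summand, realizing the mixed quadratic invariant. Pulling back along the projections $p_i \colon X \to \cA$ and $p_{ij} \colon X \to \cA \times_Y \cA$ and forming products in $CH^*(X)$, Lemma~\ref{lemma:prod} guarantees that the resulting classes lie in $Gr^0_L CH^\bullet(X)$ and that their images under $\Box^{p,0}$ generate $H^0(Y, R^{2p}F_*\Q)$.

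For $i=2$, I would exploit that $Y$ is a smooth projective curve and that $R^{2p-2}F_*\Q$ is a polarizable, hence semisimple, variation of Hodge structure. Decomposing $R^{2p-2}F_*\Q = L^{\mathrm{triv}} \oplus L'$ with $L'$ carrying no trivial subquotient, one has $H^2(Y, L') = L'_{\pi_1(Y)} = 0$, and therefore
\[
H^2(Y, R^{2p-2}F_*\Q) \cong H^2(Y, \Q) \otimes H^0(Y, R^{2p-2}F_*\Q).
\]
Given an algebraic cycle $Z$ producing $\alpha \in H^0(Y, R^{2p-2}F_*\Q)$ (using the $i=0$ case for codimension $p-1$), the product $F^*[y] \cdot Z \in CH^p(X)$, with $[y]\in CH^1(Y)$ the class of a point, lies in $Gr^2_L CH^p(X)$ by Lemma~\ref{lemma:prod} (since $F^*[y]$ pulls back from $Y$, hence lies in $L^2 CH^1(X)$), and its image is $[y] \otimes \alpha$.

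The main obstacle is the geometric realization step in the middle: identifying explicit algebraic cycles on $\cA$ and $\cA \times_Y \cA$ whose fibrewise cohomology classes are the quadratic $G$-invariants. This is exactly where the Shimura-variety structure is used, through the existence of a relative polarization and a Poincaré-type line bundle on the fibred self-product. Everything else is formal: Künneth, invariant theory, and the semisimplicity/vanishing arguments on the curve $Y$.
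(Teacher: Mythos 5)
Your reduction via K\"unneth, $R^if_*\Q=\wedge^iR^1f_*\Q$, Theorem~\ref{thm:vz}(3) and Lemma~\ref{lemma:inv} to the statement that everything is generated by the quadratic invariants $(V\otimes V)^G$ is exactly the paper's first step, and your $i=2$ argument (vanishing of $H^2(Y,L')$ for the non-trivial isotypic part, then multiplying by $F^*[y]$) is essentially the paper's hard Lefschetz square $H^0(Y,R^{2p}F_*\Q)\cong H^2(Y,R^{2p}F_*\Q)$ in a slightly more hands-on form; that part is fine. The genuine gap is the step you yourself flag as ``the main obstacle'': realizing the quadratic invariants by algebraic cycles. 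As written this step is not carried out, and the specific proposal is doubtful on two counts. First, $(V\otimes V)^G$ need not be one-dimensional: when $\Cor(D)$ does not split, $V\otimes\C\cong (V_1\otimes\cdots\otimes V_d)^{\oplus 2}$, so $(V\otimes V)^{G(\C)}$ is four-dimensional, and a single theta class $\Theta$ together with one Mumford--Poincar\'e class $\Pi=m^*\Theta-p_1^*\Theta-p_2^*\Theta$ cannot span it. Second, even in the split case you would still have to verify that the fibrewise class of $\Pi$ generates the $V\otimes V$ invariants, which requires an explicit computation with the polarization coming from the quaternionic construction.

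The paper avoids this entirely: since the special Mumford--Tate group of a very general fibre equals the Zariski closure of the monodromy (Theorem~\ref{thm:vz}(3)), the monodromy invariants $H^0(Y,R^2F_*\Q)(1)$ consist precisely of Hodge classes, and the Lefschetz $(1,1)$ theorem on the smooth projective variety $X$ (together with exactness of $\Hodge$ from Lemma~\ref{lemma:HodgeFun} applied to the surjection $H^2(X,\Q)\to Gr^0_LH^2(X,\Q)$) shows that $\Box^{1,0}$ is surjective; all the degree-two invariants are then hit by divisor classes without naming them. After that, surjectivity of $S^pGr^0_LCH^1(X)\to H^0(Y,R^{2p}F_*\Q)$ and compatibility of the cycle map with products give $\Box^{p,0}$. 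You should replace your explicit-cycle step by this Lefschetz $(1,1)$ argument; with that substitution your write-up becomes a complete proof along the same lines as the paper's.
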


\begin{proof}
  We first treat the case of $i=0$. The surjectivity of $\Box^{0,0}$
  is trivially true, and  the surjectivity of  $\Box^{1,0}$
  is a consequence of the Lefschetz $(1,1)$ theorem. We have that
  $R^if_*\Q= \wedge^i R^1f_*\Q$. This together with the  K\"unneth
  formula and lemma \ref{lemma:inv} implies that the product map
  $$S^p H^0(Y, R^2F_*\Q)\to H^0(Y, R^{2p}F_*\Q)$$
  is surjective. Therefore
  $$ S^p Gr^0_L CH^1(X)\to H^0(Y, R^{2p}F_*\Q)$$
  is surjective. This implies the surjectivity of $\Box^{p,0}$.

 We have the  commutative diagram
  $$
\xymatrix{
 Gr^0_L CH^p(X)\ar[r]^{\Box^{p,0}}\ar[d] & H^0(Y, R^{2p}F_*\Q)\ar[d]^{\cong} \\ 
 Gr^2_L CH^{p+1}(X)\ar[r]^{\Box^{p+1,2}} & H^2(Y, R^{2p}F_*\Q)
}
$$
where the isomorphism on the right is hard Lefschetz.
This implies surjectivity of $\Box^{p+1,2}$.

\end{proof}

\begin{prop}
  For all $p$,
  $$Gr^p_FH^1(Y, R^{2p-1}F_*\C)=0$$
\end{prop}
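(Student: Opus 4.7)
The plan is to combine the Dolbeault description of the Hodge filtration on the cohomology of a variation of Hodge structure over the compact curve $Y$ with the rigid form of the Higgs bundle in Theorem \ref{thm:vz}(4). By K\"unneth,
$$R^{2p-1}F_*\C \;=\; \bigoplus_{i_1+\cdots+i_n=2p-1}\wedge^{i_1}V\otimes\cdots\otimes\wedge^{i_n}V,\qquad V = R^1f_*\C,$$
and since $Gr^p_F$ is exact, it suffices to prove $Gr^p_FH^1(Y,W)=0$ for each K\"unneth summand $W$, which is a sub-VHS of $V^{\otimes(2p-1)}$. For a polarizable VHS $W$ on the projective curve $Y$ with graded Higgs bundle $(E_W^\bullet,\theta_W)$, the Hodge filtration on $H^1(Y,W)$ is computed by
$$Gr^p_FH^1(Y,W)\;\cong\;\HH^1\bigl(Y,\; E_W^p\xrightarrow{\theta_W} E_W^{p-1}\otimes\Omega_Y^1\bigr),$$
so it is enough to show that $\theta_W$ is an isomorphism of vector bundles on $Y$, since then the two-term complex is acyclic.

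To prove this, I would decompose $W$ into $G(\C)=SL_2(\C)^d$-isotypic components. By Theorem \ref{thm:vz}(3) the monodromy image $\rho(\Gamma)$ is Zariski dense in $G$, so the $\Gamma$- and $G(\C)$-isotypic decompositions of $W$ coincide and both lift to a decomposition of $W$ as a variation of Hodge structure, hence as a Higgs bundle. Each isotypic component has the form $R\otimes M$, where $R=\bigotimes_{j=1}^d\mathrm{Sym}^{a_j}(\mathrm{std}_j)$ is an irreducible $G(\C)$-representation and $M$ is a multiplicity space carrying a constant Hodge structure of pure Tate type $\Q(-t)^{\oplus m}$, with $t=(2p-1-a_1)/2$. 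Since $W$ has odd weight $2p-1$, only components with $a_1$ odd (hence $a_1\ge1$) occur. By Theorem \ref{thm:vz}(4), the Higgs field on $V$ is $\theta_1\otimes\mathrm{id}$ with $\theta_1\colon E_1^1\to E_1^0\otimes\Omega_Y^1$ an \emph{isomorphism}; consequently the induced Higgs field on $R\otimes M$ is the derivation $\theta_1^{(a_1)}$ on $\mathrm{Sym}^{a_1}V_1$, tensored with the identity on the factors $\mathrm{Sym}^{a_j}V_j$ for $j\ge2$ and on $M$.

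Since each Hodge piece $\mathrm{Sym}^{p'}E_1^1\otimes\mathrm{Sym}^{a_1-p'}E_1^0$ of $\mathrm{Sym}^{a_1}V_1$ is a line bundle ($E_1^0,E_1^1$ being of rank one) and $\theta_1$ is nowhere vanishing, a direct derivation computation shows that $\theta_1^{(a_1)}$ is an isomorphism of line bundles between any two consecutive Hodge pieces whenever $p'\ge1$. For the $(p,p-1)$-slot of $R\otimes M$, the relevant $p'$ equals $p-t=(a_1+1)/2\ge1$, so $\theta_{R\otimes M}\colon(R\otimes M)^{p,p-1}\xrightarrow{\sim}(R\otimes M)^{p-1,p}\otimes\Omega_Y^1$ is an isomorphism; summing over isotypic components shows that $\theta_W$ is an isomorphism, as required. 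The main technical point is the Tate-twist bookkeeping: one must identify the Hodge structure on each $G$-isotypic summand with the claimed form $R\otimes M$, and verify that only $\theta_1$ contributes to the Higgs field of $R\otimes M$ (the factors $\mathrm{Sym}^{a_j}V_j$ with $j\ge2$ being flat). Once this is in hand, the entire vanishing reduces to the rank-one statement that $\theta_1$ is an isomorphism, propagated through symmetric powers.
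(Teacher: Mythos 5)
Your proposal is correct and follows essentially the same route as the paper: decompose $R^{2p-1}F_*\C$ into irreducible $SL_2(\C)^d$-summands, observe that only the first tensor factor carries a nontrivial Higgs field, and reduce the vanishing of $\HH^1$ of the two-term complex in the relevant Hodge degree to the fact from Theorem \ref{thm:vz}(4) that $\theta:E_1^1\to E_1^0\otimes\Omega_Y^1$ is an isomorphism. The only cosmetic difference is that you phrase the key step via the symmetric-power derivation on line bundles where the paper writes the same identification as $[E_1^1\xrightarrow{\sim}E_1^0\otimes\Omega_Y^1]\otimes E_1^I\otimes M'$.
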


\begin{proof}
The local system  $ R^{2p-1}F_*\C$ corresponds to the $SL_2(\C)^d$-module $\wedge^{2p-1}(V^{\oplus n})$. 
We can decompose it into a sum of irreducible modules, and reduce to the problem of showing that
\begin{equation}\label{eq:H1D0}
H^1(M^q\xrightarrow{\phi} M^{q-1}\otimes \Omega_Y^1)=0
\end{equation}
where
$$(M,\phi)= S^{2q-1}(E_1,\theta)\otimes  S^{r_2}(E_2,0)\otimes\ldots S^{r_d}(E_d,0)$$
$$= S^{2q-1}(E_1,\theta)\otimes (M',0)$$
and $q,r_1,\ldots$ are arbitrary.
Let $I= (1,\ldots, 1,0\ldots, 0)$ with $q$ $1$'s followed by $q$ $0$'s.
Then  we have an isomorphism of complexes
$$M^q\xrightarrow{\phi} M^{q-1}\otimes \Omega_Y^1\cong [E_1^1\xrightarrow{\sim} E_1^0\otimes\Omega_Y^1]\otimes E_1^I\otimes M'$$
This implies \eqref{eq:H1D0}.
 
\end{proof}

\begin{proof}[Proof of theorem~\ref{thm:SCHodge}]
This is a consequence of corollary \ref{cor:tate} and the last two propositions.  
\end{proof}

\end{document}